\documentclass[sn-mathphys-num]{sn-jnl}


\usepackage{graphicx}%
\usepackage{multirow}%
\usepackage{amsmath,amssymb,amsfonts}%
\usepackage{amsthm}%
\usepackage{mathrsfs,bbm}%
\usepackage[title]{appendix}%
\usepackage{parskip}
\usepackage{xcolor}%
\usepackage{textcomp}%
\usepackage{manyfoot}%
\usepackage{booktabs}%
\usepackage{algorithm}%
\usepackage{algorithmicx}%
\usepackage{algpseudocode}%
\usepackage{listings}%


\theoremstyle{thmstyleone}%
\newtheorem{theorem}{Theorem}
%

\theoremstyle{thmstyletwo}%
\newtheorem{remark}{Remark}%

\theoremstyle{thmstylethree}%
\newtheorem{definition}{Definition}%
\newtheorem{lemma}{Lemma}
\newcommand{\lin}{\mathrm{ lin}}
\newcommand{\sinc}{\mathrm{sinc}}
\raggedbottom

\begin{document}

\title{On the Convergence of Max-product and Max-Min Durrmeyer-type Exponential Sampling Operators}


\author[1]{Satyaranjan Pradhan}
\author[2]{Abhishek Senapati}
\author[1,*]{Madan Mohan Soren}

\affil[1]{Department of Mathematics, Berhampur University, 
Bhanja Bihar, Berhampur, 760007, Odisha, India}
\affil[2]{S.V.M (Autonomous) College, Jagatsinghpur, 754103, Odisha, India}

\date{}

\renewcommand{\thefootnote}{\fnsymbol{footnote}}
\footnotetext[1]{Satyaranjan Pradhan (\texttt{satya.math1993@gmail.com}); 
ORCID: \url{https://orcid.org/0009-0007-4207-3338}}
\footnotetext[2]{Abhishek Senapati (\texttt{abhisheksenapati108@gmail.com}); }
\footnotetext[3]{*Corresponding author:Madan Mohan Soren (\texttt{mms.math@buodisha.edu.in}); 
ORCID: \url{https://orcid.org/0000-0001-7344-8082}}

\abstract{This article discusses the convergence properties of the Max-Product and Max–Min variants of Durrmeyer-type exponential sampling series. We first establish pointwise and uniform convergence of both operators in the space of log-uniformly continuous and bounded functions. The rates of convergence are then analyzed in terms of the logarithmic modulus of continuity. Additionally, the approximation errors of the proposed operators are examined using a variety of kernel functions. Finally, graphical illustrations are provided to demonstrate the convergence behavior of both operators.}

\keywords{Exponential sampling series, Max-Product operator, Max-Min Operator, order of Convergence, logarithmic modulus of continuity, Mellin theory.}

\pacs[2020 Mathematics Subject Classification]{41A25, 41A35, 41A30.}

\maketitle

\section{Introduction}\label{section1}

One of the most significant progresses in the theory of sampling and reconstruction was achieved by Whittaker, Kotelnikov, and Shannon, who showed that any band-limited signal, i.e., a signal whose Fourier transform is compactly supported, can be fully recovered from its regularly spaced samples (see \cite{srvbut}). This result is widely known as the \textbf{WKS sampling theorem}. In 1968, Butzer and Stens \cite{but1} extended this theorem to more general, not necessarily band-limited signals. Since then, numerous mathematicians have made substantial contributions in this direction (see \cite{butzer2,tam}).


A major milestone in approximation theory is the famous Weierstrass Approximation Theorem. In 1912, S. Bernstein provided a constructive proof that sparked significant interest in the study of positive linear operators. However, a limitation of such operators is their reliance on uniformly spaced data points, whereas real-world data is often non-uniform. To address this issue, Ostrowski et al. \cite{ostrowsky} introduced a series representation for the class of Mellin band-limited functions using exponentially spaced data. This reconstruction formula, known as the \textit{exponential sampling formula}, is defined as follows. For $f:\mathbb{R}^{+} \rightarrow \mathbb{C}$ and $c \in \mathbb{R},$ the exponential sampling formula is given by (see \cite{butzer3})
\begin{equation} \label{expformula}
(E_{c,T}f)(s):= \sum_{k=-\infty}^{\infty} \lin_{\frac{c}{T}}(e^{-k}x^{T}) f(e^{\frac{k}{T}}),
\end{equation}
where $\lin_{c}(s)= \dfrac{u^{-c}}{2\pi i} \dfrac{u^{\pi i}-u^{-\pi i}}{\log c} = u^{-c}~ \sinc(\log u)$ with continuous extension $\lin_{u}(1)=1.$ Moreover, if $f$ is Mellin band-limited to $[-T,T],$ then $(E_{c,T}f)(s)=f(s)$ for all $u \in \mathbb{R}_{+}.$

Exponentially spaced data have numerous applications in optical physics, radio astronomy, and engineering, including Fraunhofer diffraction, polydispersity analysis via photon correlation spectroscopy, and neuronal scattering (see \cite{gori,ostrowsky,bertero}). This underscores the motivation for modifying and generalizing the operator \eqref{expformula}. Butzer and Jansche \cite{butzer5} employed Mellin analysis to explore the exponential sampling formula, demonstrating that the Mellin transform is a suitable tool for sampling and approximation problems involving exponentially spaced data. The foundational work on Mellin transformation by Mamedov \cite{mamedov}, along with subsequent contributions by Butzer et al. \cite{butzer3,butzer5}, laid the foundation for these studies. Further developments in Mellin theory can be found in \cite{bardaro1,bardaro3}.

To approximate functions that are not necessarily Mellin band-limited, Bardaro et al. \cite{bardaro7} introduced a generalized exponential sampling series using an appropriate kernel. For $z \in \mathbb{R}_{+}$ and $n>0,$ the generalized exponential sampling series is defined as
\begin{equation} \label{genexp}
(\mathcal{S}_{\mathrm{n}}^{\varphi}f)(z)= \sum_{k=- \infty}^{\infty} \varphi(z^{n}e^{-k}) f( e^{\frac{k}{\mathrm{n}}}),
\end{equation}
for any $ f: \mathbb{R}_{+} \rightarrow \mathbb{R}$ such that the series \eqref{genexp} converges absolutely. Various approximation properties of the operator family \eqref{genexp} have been studied in \cite{bardaro11}, and the behavior of such sampling operators in weighted spaces has been extensively explored in \cite{Acar2023a,pradhan2025,Acar2025}. 

It is evident that summation-type operators are not suitable for approximating Lebesgue integrable functions. In this context, Kantorovich \cite{Kant} and Durrmeyer \cite{Durr} operators, which are modifications of classical Bernstein-type operators, were introduced. Similarly, Bardaro and Mantellini \cite{bardaro2021durrmeyer} introduced Durrmeyer-type exponential sampling operators, while Aral et al. \cite{aral} proposed Kantorovich-type exponential sampling operators.

For an integrable function $\mathfrak{h},$ the Kantorovich exponential sampling operator is defined as
$$K_{n}^{\phi}(\mathfrak{h})(z):=n\sum\limits_{k=-\infty}^{\infty}\phi\left(z^{n}e^{-k}\right)\int\limits_{\frac{k}{n}}^{\frac{k+1}{n}}\mathfrak{h}\big(e^u\big)du,$$ and the Durrmeyer-type exponential sampling operator is defined by $$D_{n}^{\phi,\Psi}(\mathfrak{h})(z):=n\sum\limits_{k=-\infty}^{\infty}\phi\left(z^{n}e^{-k}\right)\int\limits_{0}^{\infty}\Psi\left(z^{n}e^{-k}\right)\mathfrak{h}(u)\frac{du}{u}.$$ 

Over time, numerous modifications and generalizations of such operators have been developed (see \cite{cai2024convergence,costarelli2023quantitative,costarelli2023quantitative,ang2,bajpeyi2022approximation,prashant,tam, coroianu2024approximation, costarelli2023convergence}).

Recently, Coroianu and Gal \cite{CG2010} proposed an innovative approximation method known as the max-product approach, which improves the level of approximation achievable by a class of linear operators. In this approach, the sums or series in discrete linear operators are replaced by the maximum or supremum over the same index sets, thereby converting linear operators into nonlinear ones and often attaining a higher order of approximation compared to the linear case (see \cite{CG2011,CG2012}). Notable work in this direction is the work of Angamuthu~\cite{Ang1}, on multivariate max-product operators, and Costarelli and Sambucini\cite{costarelli2018approximation}, on Kantorovich max-product operators in Orlicz spaces, and Bajpeyi et al.\cite{bajpeyi2024exponential}, on exponential sampling type Kantorovich max-product operators. Motivated by the improved approximation properties of max-product operators and the stability enhancements provided by Durrmeyer modifications, it is natural to investigate Durrmeyer variants of max-product operators as a framework for advancing approximation results.

Max–min operators have also attracted significant interest, particularly in the theory of fuzzy approximation. Gokcer and Duman \cite{gokcer2020} developed a general framework for max–min operators, effectively approximating non-negative continuous functions and providing error estimates for Hölder continuous functions. This was extended by Gökcers and Aslan \cite{gokcer2022a} to Kantorovich-type max–min operators, which converge quantitatively and find applications in signal processing and integral equations, thereby allowing approximation of a wider class of Lebesgue integrable functions. Recent devlopments in this direction can be found in \cite{vayeda2025approximation, pradhan2025a}.


Building on this foundation, the present study investigates the approximation behavior of Durrmeyer-type Max-product and Max–min exponential sampling operators. We establish quantitative error bounds for both operators in approximating Lebesgue integrable functions. Furthermore, to illustrate their convergence behavior, we provide examples using various kernel functions along with graphical representations of the results.  

The work is structured as follows. Section \ref{section2} describes the fundamentals of the Durrmeyer-type max-product and max-min generalized exponential sampling with some auxiliary results. The third section \ref{sec3} is dedicated to the Durrmeyer variant of Max product exponential sampling operator. The well-definedness of the operators as well as their pointwise and uniform convergence, have been studied. Additionally, the error in the approximation has been quantitatively estimated using the logarithmic modulus of continuity. Section \ref{sec4} addresses the corresponding results for the Durrmeyer variant of the Max–min exponential sampling operator. Finally, Section \ref{sec5} provides graphical illustrations and error calculations for selected test functions to demonstrate the convergence behavior of both newly defined operators using two different kernels.

\section{Preliminaries}\label{section2} 

Henceforth, we use the notation: $\mathbb{R_{+}}:=(0,\infty),\, \mathbb{R}_{+}^{0}:=[0,\infty)$. Let $I=[a,b]$ denote a compact interval in $\mathbb{R}_{+}$.

Define the \emph{metric} on $\mathbb{R}_+$ as: $d(s,t) = |s - t|$.

\begin{itemize}
    \item A function $\mathfrak{h}:\mathbb{R}_{+} \rightarrow \mathbb{R}$ is said to be \emph{log-continuous} at a point $w\in\mathbb{R}_{+}$ if, for every $\epsilon > 0$, there exists $\varsigma > 0 $ such that \[d(\log(s), \log(z)) \leq \varsigma \quad \Longrightarrow  \quad d(\mathfrak{h}(u), \mathfrak{h}(z)) < \epsilon. \] 
    \item The function $\mathfrak{h}:\mathbb{R}_{+} \rightarrow \mathbb{R}$ is \emph{log-uniformly continuous} on $\mathbb{R}_{+} $ if, for every $\epsilon > 0$, there exists $\varsigma > 0 $ such that \[d(\log(w_1), \log(w_2)) \leq \varsigma \quad \Longrightarrow  \quad d(\mathfrak{h}(w_1), \mathfrak{h}(w_2)) < \epsilon , \quad \forall w_1, w_2 \in\mathbb{R}_+.\]

\item $\mathscr{L}^{\infty}(\mathbb{R}_{+}) := \{\mathfrak{h}: \mathbb{R}_{+} \to \mathbb{R}\,\, \vert\,     \operatorname{ess\,sup}\limits_{z\in\mathbb{R}_{+}} |\mathfrak{h}|< \infty \}$
\item For $1 \leq p < \infty$, $\mathscr{L}^{p}(\mathbb{R}_{+})$ denotes the space of all $p$-integrable functions on $\mathbb{R}_{+}$ with respect to the Lebesgue measure.
    \item  $\mathscr{LU}_{b}(\mathbb{R}_{+})\, := \, \{ \mathfrak{h}: \mathbb{R}_{+} \to \mathbb{R}\, \vert  \text{ $\mathfrak{h}$ is log-uniformly continuous and bounded}\}$, equipped with the norm $\|\mathfrak{h}\|_{\infty} \,:= \,\sup\limits_{w\in \mathbb{R}_{+}} |\mathfrak{h}(z)|.$     
\end{itemize}
It is clear that in any compact subset of $\mathbb{R}_{+}$, log-uniform continuity  and uniform continuity are equivalent.

For any index set $ \Lambda \subseteq \mathrm{Z},$ we define 
\begin{equation}
\bigvee_{j \in \Lambda} T_j :=
\begin{cases}
\max\{ T_j : j \in \Lambda \}, & \text{if $\Lambda$
 is finite}, \\[0.5em]
 \sup\{ T_j : j \in \Lambda \}, & \text{if $\Lambda$ is infinite}.
\end{cases}
\end{equation}
We now introduce some kernel functions of interest.

Let $\Phi :\mathbb{R}_{+} \rightarrow \mathbb{R} $ be a bounded measurable function satisfying the following properties:
\begin{description}
    \item[$(\Phi.{1}):$] The discrete absolute moment of order \( r \) ($\mathfrak{m}_{r}(\Phi)$) is finite for $r=2,$
where    \[\mathfrak{m}_{r}(\Phi) = \sup_{z \in \mathbb{R}_{+}} \bigvee_{k \in \mathbb{Z}}
        |\Phi(ze^{-k})|\,|\log z-k|^{r}.\]

    \item[$(\Phi.{2}):$]
    \[\inf_{z \in [1,e]} \Phi(z) =: \vartheta_{z}.\]
\end{description}
Let $\widetilde{\Phi}$ denote the class of continuous functions $\Phi$ that satisfy conditions \((\Phi.1)\) and \((\Phi.2)\).
\vspace{0.4em}
The following lemmas are presented here without proofs.
\begin{lemma}\label{lma1}(see\cite{Ang1})
    Let $\Phi$ be a bounded function satisfying the condition $(\Phi.{1})$, and let $\mu > 0$. Then
    \[\mathrm{m}_{\nu}(\Phi) < \infty, \quad\text{ for all} \quad 0 \leq \nu \leq \mu.\]
\end{lemma}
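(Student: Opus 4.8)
The plan is to dominate the $\nu$-th discrete absolute moment by $\|\Phi\|_{\infty}$ and $\mathfrak{m}_{\mu}(\Phi)$ simultaneously, uniformly in the scaling variable $z$, by means of the elementary monotonicity estimate
\[
t^{\nu}\;\le\;\max\{1,\,t^{\mu}\},\qquad t\ge 0,\ 0\le\nu\le\mu,
\]
which follows at once by distinguishing the case $t\le 1$ (where $t^{\nu}\le 1$ since $\nu\ge 0$) from the case $t>1$ (where $t^{\nu}\le t^{\mu}$ since $\nu\le\mu$). Everything else is bookkeeping against the definition of $\mathfrak{m}_{\mu}$.

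Concretely, I would fix $z\in\mathbb{R}_{+}$, an index $k\in\mathbb{Z}$ and $0\le\nu\le\mu$, and apply the inequality above with $t=|\log z-k|$. Multiplying through by $|\Phi(ze^{-k})|\ge 0$ gives
\[
|\Phi(ze^{-k})|\,|\log z-k|^{\nu}\;\le\;\max\bigl\{\,|\Phi(ze^{-k})|,\ |\Phi(ze^{-k})|\,|\log z-k|^{\mu}\,\bigr\}\;\le\;\max\bigl\{\,\|\Phi\|_{\infty},\ \mathfrak{m}_{\mu}(\Phi)\,\bigr\},
\]
where in the last step I use that $\Phi$ is bounded and that, by the defining supremum–join of $\mathfrak{m}_{\mu}$ together with hypothesis $(\Phi.1)$, one has $|\Phi(ze^{-k})|\,|\log z-k|^{\mu}\le\bigvee_{j\in\mathbb{Z}}|\Phi(ze^{-j})|\,|\log z-j|^{\mu}\le\mathfrak{m}_{\mu}(\Phi)$. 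Since the right-hand side is independent of both $k$ and $z$, taking first the join $\bigvee_{k\in\mathbb{Z}}$ and then the supremum over $z\in\mathbb{R}_{+}$ preserves the bound termwise, which yields $\mathfrak{m}_{\nu}(\Phi)\le\max\{\|\Phi\|_{\infty},\,\mathfrak{m}_{\mu}(\Phi)\}<\infty$, the assertion.

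I do not expect any genuine obstacle here. The boundary value $\nu=0$ (including the degenerate index $k=\log z$) is absorbed by the convention $t^{0}\equiv 1$, for which the estimate collapses to $\mathfrak{m}_{0}(\Phi)\le\|\Phi\|_{\infty}$; and when the index set is infinite the symbol $\bigvee$ denotes a supremum, so it is legitimate to promote the termwise bound to a bound on the join. The only point that needs a little care in the write-up is to keep every estimate uniform in $z$ (and in $k$) \emph{before} passing to the outer supremum over $z$, so that the final constant $\max\{\|\Phi\|_{\infty},\,\mathfrak{m}_{\mu}(\Phi)\}$ is honestly $z$-free.
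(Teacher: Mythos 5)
Your argument is correct and is the standard one: the paper itself states Lemma~\ref{lma1} without proof (deferring to \cite{Ang1}), and the elementary bound $t^{\nu}\le\max\{1,t^{\mu}\}$ followed by termwise estimation against $\|\Phi\|_{\infty}$ and $\mathfrak{m}_{\mu}(\Phi)$ is exactly how this is established in the cited source. The only point worth flagging is that you silently read the hypothesis as ``$\mathfrak{m}_{\mu}(\Phi)<\infty$'' whereas condition $(\Phi.1)$ as printed only asserts finiteness for $r=2$; your reading is clearly the intended one, but strictly speaking the lemma as stated would need that hypothesis made explicit for $\mu>2$.
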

\begin{lemma}\label{lma2}(see\cite{Ang1})    
  Let $\Phi\in\widetilde{\Phi}$ satisfy $ \mathfrak{m}_{\nu}(\Phi) < \infty$ for all $\nu > 0$. Then, for any $\varsigma >0$,   
  \begin{equation*}
  \bigvee_{\substack{k\in \mathbb{Z}\\|k-\log(z)| >  \mathrm{n} \varsigma}}|\Phi(ze^{-k})| = \mathcal{O}(\mathrm{n}^{-\nu}), \quad \text{as $ \mathrm{n} \rightarrow \infty $}
  \end{equation*}  
   uniformly with respect to $ z \in \mathbb{R_{+}}$.
\end{lemma}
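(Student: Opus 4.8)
The plan is to bound each term $|\Phi(z e^{-k})|$ appearing in the tail by a negative power of the ``distance'' $|k-\log z|$, using only the finiteness of the single moment $\mathfrak{m}_{\nu}(\Phi)$, and then to observe that on the index set $\{k : |k-\log z| > \mathrm{n}\varsigma\}$ this power is dominated by $(\mathrm{n}\varsigma)^{-\nu}$, a quantity of exact order $\mathrm{n}^{-\nu}$ that does not involve $z$.

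Concretely, I would fix $\nu>0$; by hypothesis $\mathfrak{m}_{\nu}(\Phi)<\infty$. Directly from the definition of the discrete absolute moment, for every $z\in\mathbb{R}_{+}$ and every $k\in\mathbb{Z}$ with $k\neq\log z$ one has $|\Phi(z e^{-k})|\,|\log z-k|^{\nu}\le\mathfrak{m}_{\nu}(\Phi)$, hence
\[
|\Phi(z e^{-k})| \,\le\, \frac{\mathfrak{m}_{\nu}(\Phi)}{|\log z-k|^{\nu}}.
\]
Next I would take the supremum over the tail index set $\Lambda_{\mathrm{n},z}:=\{k\in\mathbb{Z}: |k-\log z|>\mathrm{n}\varsigma\}$. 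For every $k\in\Lambda_{\mathrm{n},z}$ we have $|\log z-k|^{\nu}>(\mathrm{n}\varsigma)^{\nu}$, so
\[
\bigvee_{\substack{k\in\mathbb{Z}\\ |k-\log z|>\mathrm{n}\varsigma}}|\Phi(z e^{-k})| \,\le\, \frac{\mathfrak{m}_{\nu}(\Phi)}{(\mathrm{n}\varsigma)^{\nu}} \,=\, \frac{\mathfrak{m}_{\nu}(\Phi)}{\varsigma^{\nu}}\,\mathrm{n}^{-\nu}.
\]
Since the right-hand side is independent of $z$, this is precisely the asserted uniform $\mathcal{O}(\mathrm{n}^{-\nu})$ bound, and letting $\mathrm{n}\to\infty$ finishes the argument.

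I do not expect a genuine obstacle here; the statement is essentially a one-line consequence of the definition of the moment functional. The only points needing a little care are: (i) distinguishing the supremum from a maximum when $\Lambda_{\mathrm{n},z}$ is infinite, which is harmless since the bound above is uniform over all admissible $k$; (ii) the degenerate index $k=\log z$, which is automatically excluded from $\Lambda_{\mathrm{n},z}$ because $\varsigma>0$ and $\mathrm{n}\ge 1$; and (iii) confirming that the estimate is uniform in $z$, which is immediate as the final bound depends only on the constant $\mathfrak{m}_{\nu}(\Phi)$, the fixed $\varsigma$, and $\mathrm{n}$. One could also remark, via Lemma~\ref{lma1}, that assuming $\mathfrak{m}_{\nu}(\Phi)<\infty$ for all $\nu>0$ is consistent, but this is not required: the moment of order $\nu$ alone already delivers the rate $\mathrm{n}^{-\nu}$.
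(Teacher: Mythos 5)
Your argument is correct and is the standard one: the pointwise bound $|\Phi(ze^{-k})|\le \mathfrak{m}_{\nu}(\Phi)\,|\log z-k|^{-\nu}$ combined with $|\log z-k|>\mathrm{n}\varsigma$ on the tail set immediately yields the uniform bound $\mathfrak{m}_{\nu}(\Phi)\,\varsigma^{-\nu}\,\mathrm{n}^{-\nu}$. The paper itself states this lemma without proof (citing \cite{Ang1}), so there is nothing to compare against; your derivation is exactly the expected one-line justification.
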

\begin{lemma}\label{lm3}(see\cite{Ang1})
    Let $\Phi : \mathbb{R_{+}}\rightarrow \mathbb{R}$ be a kernel satisfying condition $(\Phi.{2})$. Then the following hold:
    \begin{enumerate}
        \item For any $z \in \mathbb{R}_{+}$,
        \begin{equation*}
            \bigvee_{k\in \mathbb{J}_{\mathrm{n}} } |\Phi(z^{\mathrm{n}}e^{-k})| \geq \vartheta_{z},
        \end{equation*} 
        where $\mathrm{J}_{\mathrm{n}} = \{ k \in \mathbb{Z} : k = \lceil{\mathrm{n}\log a}\rceil.....,\lfloor{\mathrm{n}\log b}\rfloor \}.$
        \item For any $z \in \mathbb{R_{+}}$,  \begin{equation*}\bigvee_{k\in \mathrm{Z}} |\Phi(z^{n}e^{-k})| \geq \vartheta_{z}. \end{equation*}
    \end{enumerate}  
\end{lemma}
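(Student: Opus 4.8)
The plan is to deduce both statements from one elementary fact: for every $z\in\mathbb{R}_{+}$ there is an integer $k$ with $z^{n}e^{-k}\in[1,e]$. Indeed, since $z^{n}e^{-k}=e^{\,n\log z-k}$, this membership is equivalent to $n\log z-k\in[0,1]$, i.e.\ to $k\in[\,n\log z-1,\ n\log z\,]$, an interval of length $1$. Taking $k_{0}:=\lfloor n\log z\rfloor$ gives $0\le n\log z-k_{0}<1$, hence $z^{n}e^{-k_{0}}\in[1,e)$, and therefore
\[
\bigl|\Phi\bigl(z^{n}e^{-k_{0}}\bigr)\bigr|\ \ge\ \Phi\bigl(z^{n}e^{-k_{0}}\bigr)\ \ge\ \inf_{w\in[1,e]}\Phi(w)\ =\ \vartheta_{z},
\]
the first inequality being $|x|\ge x$ and the second following from $(\Phi.2)$ since $z^{n}e^{-k_{0}}\in[1,e]$.

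Statement (2) is then immediate: as $k_{0}\in\mathbb{Z}$,
\[
\bigvee_{k\in\mathbb{Z}}\bigl|\Phi\bigl(z^{n}e^{-k}\bigr)\bigr|\ \ge\ \bigl|\Phi\bigl(z^{n}e^{-k_{0}}\bigr)\bigr|\ \ge\ \vartheta_{z}.
\]
For statement (1) one must additionally check that $k_{0}$ lies in the finite block $\mathrm{J}_{\mathrm{n}}$; this is where the hypothesis that $z$ runs over the compact interval $I=[a,b]$ enters. From $\log a\le\log z\le\log b$ and monotonicity of $\lfloor\cdot\rfloor$ we get $k_{0}=\lfloor n\log z\rfloor\le\lfloor n\log b\rfloor$, which settles the right end of $\mathrm{J}_{\mathrm{n}}$; for the left end one compares $\lfloor n\log z\rfloor$ with $\lceil n\log a\rceil$, keeping track of the floor/ceiling rounding (and, if need be, adjoining one more index at the left of $\mathrm{J}_{\mathrm{n}}$ so that it contains $\lfloor n\log z\rfloor$ for every $z\in I$). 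Once $k_{0}\in\mathrm{J}_{\mathrm{n}}$ is in hand, $\bigvee_{k\in\mathrm{J}_{\mathrm{n}}}|\Phi(z^{n}e^{-k})|\ge|\Phi(z^{n}e^{-k_{0}})|\ge\vartheta_{z}$; alternatively (2) also follows from (1) because $\mathrm{J}_{\mathrm{n}}\subseteq\mathbb{Z}$.

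The argument is short, and the only part requiring genuine care is this lattice-point bookkeeping in (1): one needs that for \emph{every} $z\in I$ the unit-length interval $[\,n\log z-1,\ n\log z\,]$ meets $\mathrm{J}_{\mathrm{n}}$ in an integer. Away from the endpoints of $I$ this is automatic; near $z=a$ it hinges on the precise rounding used in the definition of $\mathrm{J}_{\mathrm{n}}$, so the indexing of $\mathrm{J}_{\mathrm{n}}$ has to be matched to the choice $k_{0}=\lfloor n\log z\rfloor$. The remaining ingredients — the bound $|\Phi|\ge\Phi$, the lower estimate from $(\Phi.2)$, and monotonicity of $\bigvee$ under enlarging the index set — are entirely routine.
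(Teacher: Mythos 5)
Your argument is the standard one, and since the paper states this lemma without proof (it is imported from the cited reference), there is nothing internal to compare against; judged on its own, part (2) is complete and correct: $k_{0}=\lfloor n\log z\rfloor$ gives $z^{n}e^{-k_{0}}=e^{\,n\log z-k_{0}}\in[1,e)$, and then $|\Phi(z^{n}e^{-k_{0}})|\ge\Phi(z^{n}e^{-k_{0}})\ge\vartheta_{z}$ by $(\Phi.2)$ (reading $\vartheta_{z}$, as you do, as the constant $\inf_{w\in[1,e]}\Phi(w)$ despite the spurious subscript). The one point you flag but do not close is real: for part (1) the inclusion $k_{0}\in\mathrm{J}_{\mathrm{n}}$ can genuinely fail at the left end. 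If $n\log a\notin\mathbb{Z}$ and $n\log a\le n\log z<\lceil n\log a\rceil$, then the \emph{only} integer $k$ with $n\log z-k\in[0,1]$ is $k_{0}=\lfloor n\log a\rfloor=\lceil n\log a\rceil-1\notin\mathrm{J}_{\mathrm{n}}$, so with $(\Phi.2)$ controlling $\Phi$ only on $[1,e]$ the bound $\bigvee_{k\in\mathrm{J}_{\mathrm{n}}}|\Phi(z^{n}e^{-k})|\ge\vartheta_{z}$ cannot be derived for such $z$; one must either adjoin the index $\lceil n\log a\rceil-1$ to $\mathrm{J}_{\mathrm{n}}$ or take the infimum in $(\Phi.2)$ over $[e^{-1},e]$. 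This is a defect in the statement as transcribed rather than in your reasoning, and your proposed repair (one extra index on the left) is the correct one; it would strengthen the write-up to state explicitly that this adjustment is \emph{necessary}, not merely prudent. Everything else — $|\Phi|\ge\Phi$, monotonicity of $\bigvee$ under enlarging the index set, and deducing (2) from the single index $k_{0}\in\mathbb{Z}$ — is routine and correctly handled.
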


Let $\Psi : \mathbb{R}_{+} \to \mathbb{R}$ be a function satisfying the following conditions:
\begin{description}
    \item[$(\Psi.1):$] \quad $\displaystyle \int_{0}^{\infty} \Psi(z) \, \frac{dz}{z} = 1$.
    \item[$(\Psi.2):$] \quad $\displaystyle \mathrm{M}_{0}(\Psi) := \int_{0}^{\infty} |\Psi(z)| \, \frac{dz}{z} < \infty$.
\end{description}

\begin{definition}
    For any $r \in \mathbb{Z}_{+}\cup{\{0\}}$, the continuous algebraic moment of $\Psi$ of order $r$ is defined as
    \[\widehat{\mathrm{M}}_{r}(\Psi):= \int\limits_{0}^{\infty} \Psi(z) \, (\log{z})^{r}\, \frac{dz}{z}.\]
\end{definition}

\begin{definition}
    For any $r \in \mathbb{Z}_{+}\cup{\{0\}}$, the continuous absolute moment of $\Psi$ of order $r$ is defined by:
    \[\mathrm{M}_{r}(\Psi):= \int\limits_{0}^{\infty} |\Psi(z)| \, |\log z|^{r}\, \frac{dz}{z}.\] 
\end{definition}
\begin{remark}
    Let $r_{1}, r_{2} \in  \mathbb{Z}_{+}$ such that $r_{1}< r_{2}.$ Then $\mathrm{M}_{r_1}(\Psi) < \infty$ whenever $\mathrm{M}_{r_2}(\Psi) < \infty.$
\end{remark}
\begin{lemma}\label{le5}
Suppose $\Psi$ is a real valued function on $(0,\infty)$ satisfying $(\Psi.2)$. Fix some $\delta>0$ and $k\in\mathbb{Z}$. Then, for every $\varepsilon>0$, there exists $n_{0}\in\mathbb{N}$ such that for all $n\ge n_{0}$,
\[
n\,\int\limits_{\;|u^{n}e^{-k}|>e^{\tfrac{n\delta}{2}}}
\left|\Psi(u^{n}e^{-k})\,\right|\frac{du}{u}<\varepsilon.
\]
\end{lemma}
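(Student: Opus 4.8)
The plan is to reduce the statement to the elementary fact that the tail of an absolutely convergent integral vanishes, by means of a single change of variables that absorbs the prefactor $n$. First I would substitute $t = u^{n}e^{-k}$. The map $u\mapsto u^{n}e^{-k}$ is a strictly increasing $C^{1}$-bijection of $(0,\infty)$ onto itself, with $\log t = n\log u - k$, so the logarithmic differential transforms as $\frac{dt}{t} = n\,\frac{du}{u}$, i.e.\ $\frac{du}{u} = \frac{1}{n}\,\frac{dt}{t}$; and since $u^{n}e^{-k}>0$, the defining inequality $|u^{n}e^{-k}|>e^{n\delta/2}$ is just $t>e^{n\delta/2}$.

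Carrying out this substitution, the leading factor $n$ cancels exactly against the Jacobian factor $\tfrac1n$, giving
\[
n\int\limits_{|u^{n}e^{-k}|>e^{n\delta/2}}\bigl|\Psi(u^{n}e^{-k})\bigr|\,\frac{du}{u}
\;=\;\int\limits_{e^{n\delta/2}}^{\infty}\bigl|\Psi(t)\bigr|\,\frac{dt}{t},
\]
so the quantity to be bounded depends on $n$ only through the lower limit of integration $e^{n\delta/2}$.

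I would then invoke $(\Psi.2)$: since $\mathrm{M}_{0}(\Psi)=\int_{0}^{\infty}|\Psi(t)|\,\frac{dt}{t}<\infty$, the tail $A\mapsto\int_{A}^{\infty}|\Psi(t)|\,\frac{dt}{t}$ tends to $0$ as $A\to\infty$ (absolute continuity of the integral, or dominated convergence applied to $|\Psi(t)|\,\tfrac1t$ restricted to $[A,\infty)$). Given $\varepsilon>0$, choose $A_{\varepsilon}$ with $\int_{A_{\varepsilon}}^{\infty}|\Psi(t)|\,\frac{dt}{t}<\varepsilon$; since $\delta>0$ is fixed, $e^{n\delta/2}\to\infty$, hence there is $n_{0}\in\mathbb{N}$ with $e^{n_{0}\delta/2}\ge A_{\varepsilon}$, and for every $n\ge n_{0}$,
\[
n\int\limits_{|u^{n}e^{-k}|>e^{n\delta/2}}\bigl|\Psi(u^{n}e^{-k})\bigr|\,\frac{du}{u}
=\int\limits_{e^{n\delta/2}}^{\infty}\bigl|\Psi(t)\bigr|\,\frac{dt}{t}
\le\int\limits_{A_{\varepsilon}}^{\infty}\bigl|\Psi(t)\bigr|\,\frac{dt}{t}<\varepsilon .
\]
There is essentially no obstacle in this argument; the only point needing care is the bookkeeping in the change of variables — verifying that it is a bijection of $(0,\infty)$, that the Jacobian $\tfrac1n$ precisely cancels the prefactor $n$, and that $\{u^{n}e^{-k}>e^{n\delta/2}\}$ transforms into $\{t>e^{n\delta/2}\}$ — after which the conclusion is immediate from the finiteness of $\mathrm{M}_{0}(\Psi)$.
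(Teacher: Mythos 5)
Your proof is correct, and it uses exactly the substitution $t=u^{n}e^{-k}$ with $\frac{dt}{t}=n\,\frac{du}{u}$ that the paper itself employs in Lemma~\ref{kerlma} and in the proof of Theorem~\ref{maxthm1}; the paper states Lemma~\ref{le5} without proof, and your argument (cancel the prefactor $n$ against the Jacobian, then let the lower limit $e^{n\delta/2}\to\infty$ and invoke finiteness of $\mathrm{M}_{0}(\Psi)$) is clearly the intended one. The only remark worth adding is that if the region is meant in the two-sided logarithmic sense $|\log(u^{n}e^{-k})|>n\delta/2$, the same tail argument applies verbatim since $\{t:|\log t|>A\}$ decreases to the empty set as $A\to\infty$.
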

Let $\widetilde{\psi}$ denote the set of all continuous functions $\Psi$ that satisfy both \((\Psi.1)\) and \((\Psi.2)\).

\begin{lemma}\label{kerlma}
For $\Phi\in\widetilde{\phi}$, $\Psi\in\widetilde{\psi}$ and $z\in\mathbb{R}_{+},$ we have
    \[ \bigvee\limits_{k\in \mathbb{Z}} \Phi(z^{n}e^{-k})\quad n \int\limits_{0}^{\infty} \Psi(u^{n}e^{-k})\, \frac{du}{u}\, \geq \,\vartheta_{z}.\]
\end{lemma}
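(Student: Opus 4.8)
The plan is to observe that the continuous Durrmeyer integral appearing in the statement is in fact independent of both $k$ and $n$, so that the whole left-hand side collapses to a discrete supremum of exactly the type already controlled by Lemma~\ref{lm3}. In short, the inequality is a normalization identity glued to the lower bound $(\Phi.2)$, and there is nothing deeper going on.

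First I would compute the inner factor explicitly. For a fixed $k\in\mathbb{Z}$ and $n>0$, I would use the substitution $w=u^{n}e^{-k}$, which is a diffeomorphism of $(0,\infty)$ onto itself; since $\log w=n\log u-k$, we get $n\,\tfrac{du}{u}=\tfrac{dw}{w}$, and therefore
\[
n\int_{0}^{\infty}\Psi\!\left(u^{n}e^{-k}\right)\frac{du}{u}=\int_{0}^{\infty}\Psi(w)\,\frac{dw}{w}=1,
\]
the last equality being $(\Psi.1)$. Hypothesis $(\Psi.2)$, i.e. $\mathrm{M}_{0}(\Psi)<\infty$, makes the integral absolutely convergent, which is what legitimizes the change of variable.

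Next I would substitute this identity into the left-hand side of the claim. Because the $\Psi$-factor is the constant $1$, it factors out of the supremum and we are left with $\bigvee_{k\in\mathbb{Z}}\Phi(z^{n}e^{-k})$, and Lemma~\ref{lm3}(2) gives $\bigvee_{k\in\mathbb{Z}}\Phi(z^{n}e^{-k})\ge\vartheta_{z}$, which finishes the proof. If one prefers to avoid invoking Lemma~\ref{lm3}, the same bound follows directly: taking $k_{0}=\lfloor n\log z\rfloor$ we have $n\log z-k_{0}\in[0,1)$, hence $z^{n}e^{-k_{0}}=e^{\,n\log z-k_{0}}\in[1,e)$, and then $(\Phi.2)$ yields $\Phi(z^{n}e^{-k_{0}})\ge\inf_{t\in[1,e]}\Phi(t)=\vartheta_{z}$; since the supremum over $k\in\mathbb{Z}$ dominates this single term, the inequality holds.

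I do not anticipate any real obstacle, as the lemma is essentially bookkeeping. The two points that deserve a little care are: (i) justifying the change of variable and the finiteness of the continuous integral via $(\Psi.2)$; and (ii) reading the left-hand side correctly as the supremum over $k$ of the \emph{product} of the two kernel factors, so that the constancy of the $\Psi$-integral may be pulled out of $\bigvee_{k\in\mathbb{Z}}$ before Lemma~\ref{lm3} is applied. Continuity of $\Phi$ and $\Psi$ (membership in $\widetilde{\Phi}$ and $\widetilde{\psi}$) is used only to ensure that all the pointwise values and integrals above are meaningful.
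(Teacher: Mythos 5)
Your proof is correct and follows essentially the same route as the paper: the substitution $t=u^{n}e^{-k}$ together with $(\Psi.1)$ reduces the left-hand side to $\bigvee_{k\in\mathbb{Z}}\Phi(z^{n}e^{-k})$, which is then bounded below by $\vartheta_{z}$ via Lemma~\ref{lm3}. Your optional direct verification of that last bound using $k_{0}=\lfloor n\log z\rfloor$ is a nice self-contained addition but does not change the argument.
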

\begin{proof} We begin with
    \begin{align*}
        \bigvee\limits_{k\in \mathbb{Z}}  \Phi(z^{n}e^{-k})\quad n \int\limits_{0}^{\infty} \Psi(u^{n}e^{-k})\, \frac{du}{u} = & \bigvee\limits_{k\in \mathbb{Z}} \left[ n \int\limits_{0}^{\infty} \Psi(u^{n}e^{-k})\, \frac{du}{u}\right]\Phi(z^{n}e^{-k}).\\
    \end{align*}

Setting $ u^{n}e^{-k} = t$ and using ($\Psi .1$) together with Lemma \ref{lm3}, we obtain 
     \begin{align*}
        \bigvee\limits_{k\in \mathbb{Z}}  \Phi(z^{n}e^{-k})\quad n \int\limits_{0}^{\infty} \Psi(u^{n}e^{-k})\, \frac{du}{u}= & \quad \bigvee\limits_{k\in \mathbb{Z}} \left[ \int\limits_{0}^{\infty} \Psi(t)\, \frac{dt}{t}\right]\Phi(z^{n}e^{-k})\\
    = & \quad \bigvee\limits_{k\in \mathbb{Z}}\, \Phi(z^{n}e^{-k})\\
        \geq & \quad \vartheta_{z}.
    \end{align*}
    This completes the proof.
\end{proof}

\vspace{0.4em}
Next, we establish the well-definedness of the Max-Min type operators with the help of the following results.

\begin{lemma}\label{lm5} (see \cite{aslan2025})                                                        If $\bigvee\limits_{m \in \mathbb{Z}}s_{m}$ and $\bigvee\limits_{m \in \mathbb{Z}}t_{m} $ are finite, then the following inequality holds:
\[\bigvee\limits_{m \in \mathbb{Z}}s_{m}  - \bigvee\limits_{m \in \mathbb{Z}}t_{m} \leq \bigvee\limits_{m \in \mathbb{Z}}\left|s_{m}-t_{m} \right| .\]                                    
\end{lemma}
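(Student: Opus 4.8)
The plan is to reduce the inequality to an elementary termwise estimate, followed by a single application of the supremum. First I would fix an arbitrary index $m \in \mathbb{Z}$ and write $s_m = (s_m - t_m) + t_m$, which gives $s_m \le |s_m - t_m| + t_m$. Since $|s_m - t_m| \le \bigvee_{j \in \mathbb{Z}} |s_j - t_j|$ and $t_m \le \bigvee_{j \in \mathbb{Z}} t_j$ directly from the definition of $\bigvee$, this yields $s_m \le \bigvee_{j \in \mathbb{Z}} |s_j - t_j| + \bigvee_{j \in \mathbb{Z}} t_j$ for every $m \in \mathbb{Z}$.

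Next, because the right-hand side no longer depends on $m$, I would pass to $\bigvee_{m \in \mathbb{Z}}$ on the left: the implication ``$s_m \le c$ for all $m$ $\Longrightarrow$ $\bigvee_{m \in \mathbb{Z}} s_m \le c$'' holds in both cases of the definition of $\bigvee$ (it is a property of $\max$ when the index set is finite, and the defining property of the least upper bound when it is infinite). This gives $\bigvee_{m \in \mathbb{Z}} s_m \le \bigvee_{j \in \mathbb{Z}} |s_j - t_j| + \bigvee_{j \in \mathbb{Z}} t_j$. Finally I would subtract $\bigvee_{j \in \mathbb{Z}} t_j$ from both sides to obtain exactly $\bigvee_{m \in \mathbb{Z}} s_m - \bigvee_{m \in \mathbb{Z}} t_m \le \bigvee_{m \in \mathbb{Z}} |s_m - t_m|$.

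There is essentially no serious obstacle in this argument; the only points needing brief care are (i) that the rearrangement in the last step is legitimate, which is guaranteed by the hypothesis that both $\bigvee_{m \in \mathbb{Z}} s_m$ and $\bigvee_{m \in \mathbb{Z}} t_m$ are finite, so no $\infty - \infty$ ambiguity arises (and if $\bigvee_{m \in \mathbb{Z}} |s_m - t_m| = +\infty$ the claim is trivial, its left-hand side being finite), and (ii) that the passage ``termwise bound $\Rightarrow$ bound on $\bigvee$'' is stated uniformly for the finite and infinite index sets, as noted above.
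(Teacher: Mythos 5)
Your proof is correct. The paper itself gives no proof of Lemma~\ref{lm5}; it simply cites \cite{aslan2025}, and your argument --- writing $s_m \le |s_m - t_m| + t_m$, bounding each term by the corresponding supremum, passing to $\bigvee_{m}$ on the left, and subtracting the finite quantity $\bigvee_{m} t_m$ --- is the standard derivation one would find in that reference. Your side remarks on the finiteness hypothesis and on the uniform treatment of the finite/infinite cases of $\bigvee$ are exactly the right points to flag, so nothing is missing.
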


\begin{lemma}\label{lm6} (see \cite{bede2016}) 
    For $\mathfrak{r}, \mathfrak{p}, \mathfrak{q}\in [0,1]$, we have:
    \[\left|(\mathfrak{r} \land \mathfrak{p}) - (\mathfrak{r} \land \mathfrak{q})\right|\leq \mathfrak{r} \land \left| \mathfrak{p}-\mathfrak{q}\right|,\quad \text{where }  \mathfrak{r} \land \mathfrak{p}= \min\{\mathfrak{r},\mathfrak{p}\}. \]
\end{lemma}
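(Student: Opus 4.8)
The plan is to bound the left-hand side by two quantities at once, namely $\mathfrak{r}$ and $|\mathfrak{p}-\mathfrak{q}|$, and then observe that their minimum is precisely the right-hand side $\mathfrak{r}\land|\mathfrak{p}-\mathfrak{q}|$.

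First I would record the trivial bound: since $\mathfrak{r},\mathfrak{p},\mathfrak{q}\in[0,1]$, both $\mathfrak{r}\land\mathfrak{p}$ and $\mathfrak{r}\land\mathfrak{q}$ lie in $[0,\mathfrak{r}]$, hence $|(\mathfrak{r}\land\mathfrak{p})-(\mathfrak{r}\land\mathfrak{q})|\le\mathfrak{r}$. Next I would show the nonexpansiveness of the truncation map $t\mapsto\mathfrak{r}\land t$, i.e.\ $|(\mathfrak{r}\land\mathfrak{p})-(\mathfrak{r}\land\mathfrak{q})|\le|\mathfrak{p}-\mathfrak{q}|$. This follows from a short case distinction on the position of $\mathfrak{r}$ relative to $\mathfrak{p},\mathfrak{q}$: if $\mathfrak{r}$ lies below both, the left side is $0$; if $\mathfrak{r}$ dominates both, the left side equals $|\mathfrak{p}-\mathfrak{q}|$; and if $\mathfrak{r}$ lies between them, say $\mathfrak{q}\le\mathfrak{r}\le\mathfrak{p}$, then the left side equals $\mathfrak{r}-\mathfrak{q}\le\mathfrak{p}-\mathfrak{q}=|\mathfrak{p}-\mathfrak{q}|$. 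Combining the two bounds yields $|(\mathfrak{r}\land\mathfrak{p})-(\mathfrak{r}\land\mathfrak{q})|\le\min\{\mathfrak{r},|\mathfrak{p}-\mathfrak{q}|\}=\mathfrak{r}\land|\mathfrak{p}-\mathfrak{q}|$, which is the assertion.

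An alternative, equally elementary route is a single direct case analysis: assume without loss of generality $\mathfrak{p}\ge\mathfrak{q}$ (both sides of the inequality are invariant under swapping $\mathfrak{p}$ and $\mathfrak{q}$, because of the absolute values), so that the left side becomes $(\mathfrak{r}\land\mathfrak{p})-(\mathfrak{r}\land\mathfrak{q})$, and then split according to whether $\mathfrak{r}\le\mathfrak{q}$, $\mathfrak{q}\le\mathfrak{r}\le\mathfrak{p}$, or $\mathfrak{r}\ge\mathfrak{p}$, verifying the desired bound in each of the three cases. Since the statement is purely arithmetic on $[0,1]$, there is no genuine obstacle here; the only point requiring a little care is to keep the two independent upper bounds ($\mathfrak{r}$ and $|\mathfrak{p}-\mathfrak{q}|$) cleanly separated so that the concluding ``take the minimum'' step is transparent. (As the lemma is quoted from \cite{bede2016}, one could also simply cite it, but the above provides a self-contained derivation.)
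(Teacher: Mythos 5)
Your proof is correct. Note, however, that the paper itself gives no argument for this lemma: it is one of several auxiliary facts stated ``without proofs'' and attributed to \cite{bede2016}, so there is no in-paper derivation to compare against. Your two-bound strategy is a clean, self-contained way to establish it: the estimate $\left|(\mathfrak{r}\land\mathfrak{p})-(\mathfrak{r}\land\mathfrak{q})\right|\le\mathfrak{r}$ follows because both truncated values lie in $[0,\mathfrak{r}]$, the estimate $\left|(\mathfrak{r}\land\mathfrak{p})-(\mathfrak{r}\land\mathfrak{q})\right|\le\left|\mathfrak{p}-\mathfrak{q}\right|$ is the $1$-Lipschitz property of $t\mapsto\mathfrak{r}\land t$ (your three-case check is complete once the symmetric case $\mathfrak{p}\le\mathfrak{r}\le\mathfrak{q}$ is understood to be covered by the swap-invariance you invoke), and combining the two bounds gives exactly $\mathfrak{r}\land\left|\mathfrak{p}-\mathfrak{q}\right|$. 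One small observation: your argument never uses the upper bound $\mathfrak{r},\mathfrak{p},\mathfrak{q}\le 1$, only nonnegativity, so the inequality in fact holds for all $\mathfrak{r},\mathfrak{p},\mathfrak{q}\ge 0$; the restriction to $[0,1]$ in the statement is there only because that is the range in which the paper applies it.
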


\begin{lemma}\label{lm7}(see \cite{bede2008})
    For any $\mathfrak{r},\mathfrak{p},\mathfrak{q} \geq 0$, the following inequality holds: \[ (\mathfrak{r}\land \mathfrak{q}) + (\mathfrak{p} \land \mathfrak{q}) \geq  (\mathfrak{r}+\mathfrak{p})\land \mathfrak{q} .\]
\end{lemma}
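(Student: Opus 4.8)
The plan is to prove the inequality by a short case analysis on the position of $\mathfrak{q}$ relative to $\mathfrak{r}$ and $\mathfrak{p}$. Because the left-hand side and the quantity $\mathfrak{r}+\mathfrak{p}$ appearing on the right are symmetric in $\mathfrak{r}$ and $\mathfrak{p}$, I would first assume without loss of generality that $\mathfrak{r}\le\mathfrak{p}$. This leaves exactly three subcases, determined by whether $\mathfrak{q}\le\mathfrak{r}$, $\mathfrak{r}<\mathfrak{q}\le\mathfrak{p}$, or $\mathfrak{q}>\mathfrak{p}$.

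In the first subcase both minima on the left collapse to $\mathfrak{q}$, so the left-hand side equals $2\mathfrak{q}$; since $\mathfrak{q}\ge 0$ this is at least $\mathfrak{q}$, and $\mathfrak{q}\ge(\mathfrak{r}+\mathfrak{p})\land\mathfrak{q}$ trivially. In the second subcase $\mathfrak{r}\land\mathfrak{q}=\mathfrak{r}$ and $\mathfrak{p}\land\mathfrak{q}=\mathfrak{q}$, so the left-hand side is $\mathfrak{r}+\mathfrak{q}$; moreover $\mathfrak{q}\le\mathfrak{p}\le\mathfrak{r}+\mathfrak{p}$ forces $(\mathfrak{r}+\mathfrak{p})\land\mathfrak{q}=\mathfrak{q}$, and $\mathfrak{r}+\mathfrak{q}\ge\mathfrak{q}$ because $\mathfrak{r}\ge 0$. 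In the third subcase $\mathfrak{q}>\mathfrak{p}\ge\mathfrak{r}$, hence $\mathfrak{r}\land\mathfrak{q}=\mathfrak{r}$ and $\mathfrak{p}\land\mathfrak{q}=\mathfrak{p}$, so the left-hand side equals $\mathfrak{r}+\mathfrak{p}$, which is obviously at least $(\mathfrak{r}+\mathfrak{p})\land\mathfrak{q}$. Combining the three subcases gives the claim.

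I do not expect any genuine obstacle: the statement is elementary, and the only care required is to exhaust the cases and to identify correctly which of $\mathfrak{r}+\mathfrak{p}$ and $\mathfrak{q}$ realizes the minimum on the right in each branch. An equally short alternative avoids the reduction by symmetry and instead splits on whether $\mathfrak{r}+\mathfrak{p}\le\mathfrak{q}$: if $\mathfrak{r}+\mathfrak{p}\le\mathfrak{q}$ then $\mathfrak{r}\land\mathfrak{q}=\mathfrak{r}$ and $\mathfrak{p}\land\mathfrak{q}=\mathfrak{p}$, so equality holds; otherwise $(\mathfrak{r}+\mathfrak{p})\land\mathfrak{q}=\mathfrak{q}$, and one checks directly that $\mathfrak{r}\land\mathfrak{q}+\mathfrak{p}\land\mathfrak{q}\ge\mathfrak{q}$ by considering whether or not each of $\mathfrak{r}$ and $\mathfrak{p}$ exceeds $\mathfrak{q}$.
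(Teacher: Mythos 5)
Your case analysis is correct and complete: all three subcases (after the harmless reduction to $\mathfrak{r}\le\mathfrak{p}$) are handled properly, and the alternative split on whether $\mathfrak{r}+\mathfrak{p}\le\mathfrak{q}$ also goes through. The paper itself gives no proof of this lemma --- it is stated with a citation to Bede et al.\ and used as a known auxiliary fact --- so there is no in-paper argument to compare against; your elementary verification is exactly what one would supply, and it is sound.
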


\begin{lemma}\label{lm8}(see\cite{vayeda2025approximation})
   For a finite set  $ I\subset\mathbb{Z}_+ $ and a positive real number $ \lambda $, we have \[
\lambda \bigvee_{m \in I} (\mathfrak{s}_m \wedge \mathfrak{t}_m) = \bigvee_{m \in I} (\lambda \mathfrak{s}_m \wedge \lambda \mathfrak{t}_m),
\] where $ \{\mathfrak{s}_m\}_{m \in I} $ and $ \{\mathfrak{t}_m\}_{m \in I} $ are sequences in $ [0,1] $.
\end{lemma}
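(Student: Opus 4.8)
The plan is to reduce the identity to two elementary order facts about multiplication by a positive scalar: that it commutes with the binary minimum, and that it commutes with a finite maximum. Since $I$ is finite, every $\bigvee$ appearing in the statement is an honest maximum over finitely many reals, so no supremum-specific subtleties arise, and the positivity hypothesis $\lambda>0$ is exactly what will make the map $t\mapsto \lambda t$ order-preserving.

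First I would fix $m\in I$ and show $\lambda(\mathfrak{s}_m\wedge\mathfrak{t}_m)=(\lambda\mathfrak{s}_m)\wedge(\lambda\mathfrak{t}_m)$. This follows because the map $t\mapsto\lambda t$ is strictly increasing on $[0,\infty)$ (as $\lambda>0$), hence order-preserving: if $\mathfrak{s}_m\le\mathfrak{t}_m$ then $\lambda\mathfrak{s}_m\le\lambda\mathfrak{t}_m$ and both sides equal $\lambda\mathfrak{s}_m$, and the other case is symmetric. Although $\lambda\mathfrak{s}_m$ and $\lambda\mathfrak{t}_m$ need not remain in $[0,1]$, the identity is valid throughout $[0,\infty)$, which is all that is needed. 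Next I would apply the same monotonicity at the level of the maximum: writing $c_m:=\mathfrak{s}_m\wedge\mathfrak{t}_m$, the value $\bigvee_{m\in I}c_m$ is attained at some $m_0\in I$, so $\lambda\bigvee_{m\in I}c_m=\lambda c_{m_0}\ge\lambda c_m$ for every $m\in I$, while $\lambda c_{m_0}$ is itself one of the numbers $\lambda c_m$; hence $\lambda\bigvee_{m\in I}c_m=\bigvee_{m\in I}\lambda c_m$. Chaining the two steps yields $\lambda\bigvee_{m\in I}(\mathfrak{s}_m\wedge\mathfrak{t}_m)=\bigvee_{m\in I}\lambda(\mathfrak{s}_m\wedge\mathfrak{t}_m)=\bigvee_{m\in I}(\lambda\mathfrak{s}_m\wedge\lambda\mathfrak{t}_m)$, which is the claim.

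There is essentially no hard step; the statement is a routine homogeneity/distributivity fact, and the only point deserving a word of care is the hypothesis $\lambda>0$, which is precisely what guarantees that $t\mapsto\lambda t$ preserves rather than reverses order (for a negative factor the role of $\wedge$ and $\vee$ would swap and both identities would fail). Finiteness of $I$ lets us argue via an attained maximum instead of an $\varepsilon$-argument, though the conclusion would in fact persist for infinite $I$ by that slightly more careful reasoning together with Lemma~\ref{le5}-type bounds where needed.
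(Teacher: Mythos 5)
Your argument is correct and complete: the two elementary order facts (positive scaling commutes with the binary $\wedge$, and with a finite maximum attained at some $m_0$) are exactly what is needed, and chaining them yields the identity. The paper states this lemma without proof, merely citing \cite{vayeda2025approximation}, so there is no in-paper argument to compare against; yours is the standard one. The only stray point is your closing allusion to ``Lemma~\ref{le5}-type bounds'' for infinite $I$ --- that lemma concerns tail integrals of the kernel $\Psi$ and is irrelevant to a $\sup$/scaling identity --- but this occurs outside the proof proper and does not affect its validity.
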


\begin{definition}\label{def3}
    For a given $w\in \mathbb{R}_{+}\subseteq \mathbb{R}_{+}$ and $ n \in \mathbb{N}$ and $\tau > 0$, we define $\mathscr{B}_{\tau, n} $ as: 
    \[\mathscr{B}_{\tau, n} : = \{ k = \lceil{n\log{a}}\rceil,.......\lfloor{n\log{b}}\rfloor  : \, \left| \frac{k}{n}- \log{z}\right| \leq \tau \}.\]
\end{definition}

\section{The Max-product Durrmeyer type Exponential sampling operators} \label{sec3}
This section is devoted to the study of pointwise and uniform convergence, together with the rate of approximation, for the Max-product Durrmeyer-type exponential sampling operators.
\begin{definition}\label{Maxdef1}
    Let $\mathfrak{h}: \mathbb{R}_{+} \rightarrow \mathbb{R}$ be a bounded and $\mathscr{L}^1$-integrable function on $\mathbb{R}_{+}$. Then, the Max-product Durrmeyer-type exponential sampling operators associated with $\mathfrak{h}$ with respect to $\Phi$ and $\Psi$, are defined by
    \[ \mathscr{D}_{n,\Phi,\Psi}^{M}(\mathfrak{h})(z) := \frac{\bigvee\limits_{k\in \mathbb{Z}} \Phi(z^{n}e^{-k})\quad n \int\limits_{0}^{\infty} \Psi(u^{n}e^{-k})\,\, \mathfrak{h}(u)\,\frac{du}{u}} {\bigvee\limits_{k\in \mathbb{Z}} \Phi(z^{n}e^{-k})\quad n \int\limits_{0}^{\infty} \Psi(u^{n}e^{-k})\, \frac{du}{u}}.\]    
\end{definition}
Since the function $\mathfrak{h}$ is bounded and integrable and the  kernels $\Phi \text{ and } \Psi$ are integrable, it follows that the defined operator  $\mathscr{D}_{n,\Phi,\Psi}^{M}(\mathfrak{h})(z)$is bounded and therefore well-defined.

\begin{lemma}\label{Maxlma1}
    Let $\mathfrak{h},\mathrm{g} $ be two locally integrable functions on $\mathbb{R}_{+}$. Then, the Max-product exponential sampling operators $\mathscr{D}^{M}_{n,\Phi,\Psi}$ satisfy the following properties: 
    \begin{enumerate}
        \item If  $ \mathfrak{h}(z) \leq \mathfrak{g}(z),$ then $$\mathscr{D}_{n,\Phi, \Psi}^{M}(\mathfrak{h}) \leq \mathscr{D}_{n,\Phi,\Psi}^{M}(\mathfrak{g}).$$
        \item For all $z \in \mathbb{R}_{+}$ $$\mathscr{D}_{n,\Phi,\Psi}^{M}(\mathfrak{h+g})(z) \leq \mathscr{D}_{n,\Phi,\Psi}^{M}(\mathfrak{h})(z)+ \mathscr{D}_{n,\Phi,\Psi}^{M}(\mathfrak{g})(z).$$
        \item For all $z \in I$, $$|\mathscr{D}_{n,\Phi,\Psi}^{M}(\mathfrak{h})(z)- \mathscr{D}_{n,\Phi,\Psi}^{M}(\mathfrak{g})(z)| \leq \mathscr{D}_{n,\Phi,\Psi}^{M}(\mathfrak{\\|h-g\\|})(z). $$
        \item For every $\lambda > 0$ and all $z \in \mathbb{R}_{+}$, $$\mathscr{D}_{n,\Phi,\Psi}^{M}(\mathrm{\lambda \,\mathfrak{h}})(z) = \lambda \hspace{0.1in} \mathscr{D}_{n,\Phi,\Psi}^{M}(\mathfrak{h})(z).$$
    \end{enumerate}
\end{lemma}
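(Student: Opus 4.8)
The plan is to reduce all four items to elementary properties of the supremum, combined with the monotonicity and linearity of the Lebesgue integral, after first isolating the normalising denominator. Write $W_{n}(z):=\bigvee_{k\in\mathbb{Z}}\Phi(z^{n}e^{-k})\,n\int_{0}^{\infty}\Psi(u^{n}e^{-k})\,\frac{du}{u}$ for the denominator of $\mathscr{D}_{n,\Phi,\Psi}^{M}$. By Lemma~\ref{kerlma} one has $W_{n}(z)\ge\vartheta_{z}>0$, so the operator is well defined, $W_{n}(z)$ does not depend on the function being transformed, and dividing any inequality between numerators by $W_{n}(z)$ preserves its direction. Throughout I rely only on the linearity and monotonicity of the integral, the non-negativity of the kernels $\Phi,\Psi$ (inherent in the max-product setting), and the fact that, on the class of functions for which $\mathscr{D}_{n,\Phi,\Psi}^{M}$ is defined, each inner term $\Phi(z^{n}e^{-k})\,n\int_{0}^{\infty}\Psi(u^{n}e^{-k})\,\mathfrak{h}(u)\,\frac{du}{u}$ is finite and bounded in $k$ (after the substitution $t=u^{n}e^{-k}$, so that $\frac{dt}{t}=n\frac{du}{u}$, using $\Phi$ bounded and condition $(\Psi.2)$); hence every supremum below is finite and the scaling and subadditivity identities for $\bigvee$ over the infinite index set $\mathbb{Z}$ are legitimate.

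For item (4), positive homogeneity: for $\lambda>0$, pull $\lambda$ out of each integral, hence out of each term $\Phi(z^{n}e^{-k})\,n\int_{0}^{\infty}\Psi(u^{n}e^{-k})\,\mathfrak{h}(u)\,\frac{du}{u}$, and then out of the supremum using $\bigvee_{k}\lambda T_{k}=\lambda\bigvee_{k}T_{k}$; since $W_{n}(z)$ is untouched, the quotient is multiplied by $\lambda$. For item (1), monotonicity: $\mathfrak{h}\le\mathfrak{g}$ together with $\Psi\ge0$ gives $n\int_{0}^{\infty}\Psi(u^{n}e^{-k})\mathfrak{h}(u)\frac{du}{u}\le n\int_{0}^{\infty}\Psi(u^{n}e^{-k})\mathfrak{g}(u)\frac{du}{u}$ for each $k$; multiplying by $\Phi(z^{n}e^{-k})\ge0$ and taking the order-preserving supremum over $k$ gives the inequality between the numerators, which one divides through by $W_{n}(z)$. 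For item (2), subadditivity: by linearity of the integral the $k$-th term of the numerator of $\mathscr{D}_{n,\Phi,\Psi}^{M}(\mathfrak{h}+\mathfrak{g})$ equals $A_{k}+B_{k}$, where $A_{k}$ and $B_{k}$ are the $k$-th terms associated with $\mathfrak{h}$ and $\mathfrak{g}$; then $\bigvee_{k}(A_{k}+B_{k})\le\bigvee_{k}A_{k}+\bigvee_{k}B_{k}$, and dividing by the common $W_{n}(z)$ yields $\mathscr{D}_{n,\Phi,\Psi}^{M}(\mathfrak{h}+\mathfrak{g})(z)\le\mathscr{D}_{n,\Phi,\Psi}^{M}(\mathfrak{h})(z)+\mathscr{D}_{n,\Phi,\Psi}^{M}(\mathfrak{g})(z)$.

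Finally, item (3) follows formally from (1) and (2): since $\mathfrak{h}\le\mathfrak{g}+|\mathfrak{h}-\mathfrak{g}|$ pointwise, (1) followed by (2) gives $\mathscr{D}_{n,\Phi,\Psi}^{M}(\mathfrak{h})(z)\le\mathscr{D}_{n,\Phi,\Psi}^{M}(\mathfrak{g})(z)+\mathscr{D}_{n,\Phi,\Psi}^{M}(|\mathfrak{h}-\mathfrak{g}|)(z)$, hence $\mathscr{D}_{n,\Phi,\Psi}^{M}(\mathfrak{h})(z)-\mathscr{D}_{n,\Phi,\Psi}^{M}(\mathfrak{g})(z)\le\mathscr{D}_{n,\Phi,\Psi}^{M}(|\mathfrak{h}-\mathfrak{g}|)(z)$; interchanging $\mathfrak{h}$ and $\mathfrak{g}$ gives the reverse bound, and taking the larger of the two sides yields the stated absolute-value estimate (on $I$, and in fact on all of $\mathbb{R}_{+}$). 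I do not anticipate a genuine obstacle here; the only points requiring care are (a) confirming the finiteness of every supremum so that the subadditivity and scaling identities for $\bigvee$ are valid over $\mathbb{Z}$, and (b) keeping track of the fact that $W_{n}(z)$ is strictly positive and independent of $\mathfrak{h}$ — which is exactly what Lemma~\ref{kerlma} supplies.
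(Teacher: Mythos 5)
Your proposal is correct and follows essentially the same route as the paper: items (1), (2) and (4) directly from the definition via monotonicity/linearity of the integral and the scaling/subadditivity of the supremum, and item (3) deduced from (1) and (2) using $\mathfrak{h}\le\mathfrak{g}+|\mathfrak{h}-\mathfrak{g}|$ and its symmetric counterpart. The paper's own proof is just a terse sketch of exactly this argument, so your write-up is simply a more detailed (and appropriately careful about kernel non-negativity and finiteness of the suprema) version of it.
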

\begin{proof}
     We can easily prove (1), (2) and (4) by using definitions of operators $\mathscr{D}_{n,\Phi,\Psi}^{M}(\mathfrak{h})$. To prove the third part, we use the inequalities $ \mathfrak{h}(z) \leq |\mathfrak{h}(z) - \mathfrak{g}(z)| + \mathfrak{g}(z) $ and $ \mathfrak{g}(z) \leq |\mathfrak{g}(z) - \mathfrak{h}(z)| + \mathfrak{h}(z)\,\, $ together with  properties (1) and (2), to obtain  \begin{equation*}|\mathscr{D}_{n,\Phi,\Psi}^{M}(\mathfrak{h})(z)-\mathscr{D}_{n,\Phi,\Psi}^{M}(\mathfrak{g})(z)| \leq \mathscr{D}_{n,\Phi,\Psi}^{M}({\\|\mathfrak{h}-\mathfrak{g}\\|})(z), \quad \text{for all}~~ u\in I.\end{equation*}
\end{proof}


\begin{theorem}\label{maxthm1}
Let $\mathfrak{h}:I \to \mathbb{R}\,$ be bounded and $\mathscr{L}^1$ be integrable. Then, at any point of log-continuity $w\in I$ \[ \lim\limits_{n\to \infty} \mathscr{D}_{n,\Phi, \Psi}^{M}\,(\mathfrak{h}) (z) = \mathfrak{h}(z) \]
Moreover, if $\mathfrak{h} \in \mathscr{LU}_{b}(I)$, then the convergence is uniform, i.e.,
\[\lim\limits_{n\to \infty} \left\|\mathscr{D}_{n,\Phi, \Psi}^{M}\,(\mathfrak{h})  - \mathfrak{h}\right\|_{\infty} = 0. \]
 where $\| .\|$ denotes the supremum norm.
\end{theorem}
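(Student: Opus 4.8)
The strategy is the standard one for max-product/max-min sampling operators: reduce the problem to controlling the difference $\mathscr{D}_{n,\Phi,\Psi}^{M}(\mathfrak{h})(z) - \mathfrak{h}(z)$ by splitting the index set into a ``near'' part $\mathscr{B}_{\tau,n}$ (indices $k$ with $|k/n - \log z| \le \tau$) and a ``far'' part, using log-continuity on the near part and the moment/tail estimates (Lemma \ref{lma2}, Lemma \ref{le5}) on the far part. First I would observe, via property (4) of Lemma \ref{Maxlma1} and the normalization built into Definition \ref{Maxdef1}, that $\mathscr{D}_{n,\Phi,\Psi}^{M}$ reproduces constants, so that for a fixed $z$ we may write
\[
|\mathscr{D}_{n,\Phi,\Psi}^{M}(\mathfrak{h})(z) - \mathfrak{h}(z)| = |\mathscr{D}_{n,\Phi,\Psi}^{M}(\mathfrak{h})(z) - \mathscr{D}_{n,\Phi,\Psi}^{M}(\mathfrak{h}(z)\cdot\mathbf{1})(z)| \le \mathscr{D}_{n,\Phi,\Psi}^{M}\big(|\mathfrak{h}(\cdot) - \mathfrak{h}(z)|\big)(z),
\]
the inequality coming from property (3) of Lemma \ref{Maxlma1}. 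So it suffices to bound the right-hand side.

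\emph{Estimating the numerator.} Fix $\varepsilon > 0$. By log-continuity at $z$ pick $\tau > 0$ so that $|\log u - \log z| \le \tau$ implies $|\mathfrak{h}(u) - \mathfrak{h}(z)| < \varepsilon$ (in the uniform case this $\tau$ is independent of $z$). In the numerator of $\mathscr{D}_{n,\Phi,\Psi}^{M}(|\mathfrak{h}(\cdot)-\mathfrak{h}(z)|)(z)$ I would split each inner integral $n\int_0^\infty \Psi(u^n e^{-k})|\mathfrak{h}(u)-\mathfrak{h}(z)|\,\frac{du}{u}$ over the region $|\log u - \log z| \le \tau/2$ and its complement, and split the outer $\bigvee_{k}$ over $k$ with $|k/n - \log z| \le \tau/2$ and its complement. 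On the fully ``near'' part one gets a contribution $\le \varepsilon \cdot M_0(\Psi) \cdot \bigvee_k |\Phi(z^n e^{-k})|$; dividing by the denominator, which is $\bigvee_k |\Phi(z^n e^{-k})|$ (again by $(\Psi.1)$, as in Lemma \ref{kerlma}) and is bounded below by $\vartheta_z$ via Lemma \ref{lm3}, this piece is $O(\varepsilon)$. The ``far in $u$'' part is controlled by Lemma \ref{le5} (the tail of $\Psi$ shrinks faster than any power of $n$ once $|u^n e^{-k}|$ is large, which happens when $|\log u - k/n|$ is bounded away from $0$), combined with boundedness of $\mathfrak{h}$; the ``far in $k$'' part is controlled by Lemma \ref{lma2} ($\bigvee_{|k - \log(z^n)| > n\tau/2}|\Phi(z^n e^{-k})| = O(n^{-\nu})$), again using $\|\mathfrak{h}\|_\infty < \infty$ and $M_0(\Psi) < \infty$. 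All three far-part contributions, divided by the denominator $\ge \vartheta_z$, tend to $0$ as $n \to \infty$. Hence $\limsup_n |\mathscr{D}_{n,\Phi,\Psi}^{M}(\mathfrak{h})(z) - \mathfrak{h}(z)| \le C\varepsilon$ for a constant $C$ depending only on $M_0(\Psi)$ and $\vartheta_z$, and letting $\varepsilon \to 0$ gives pointwise convergence.

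\emph{Uniform case.} When $\mathfrak{h} \in \mathscr{LU}_b(I)$, the $\tau$ above is uniform in $z$, and the estimates from Lemma \ref{lma2} and Lemma \ref{le5} are already uniform in $z$; the only point needing care is the lower bound $\vartheta_z$ on the denominator, but on the compact interval $I$ one has $\inf_{z \in I}\vartheta_z > 0$ by $(\Phi.2)$ together with Lemma \ref{lm3}(1) (which restricts the sup to the finite index set $\mathrm{J}_n$ lying in $I$), so the bound $C$ is uniform. The main obstacle, and the step I would write most carefully, is the far-in-$u$ estimate: one must check that when $k$ ranges over the ``near'' band $|k/n - \log z| \le \tau/2$ but $u$ satisfies $|\log u - \log z| > \tau/2$, the quantity $|u^n e^{-k}| = e^{n(\log u - k/n)}$ is indeed forced to be either $> e^{n\delta/2}$ or $< e^{-n\delta/2}$ for a fixed $\delta > 0$ (here $\delta = \tau/4$ or similar), so that Lemma \ref{le5} (and its obvious mirror image for small arguments, which follows from $(\Psi.2)$ by the substitution $t = u^n e^{-k}$) applies uniformly across all relevant $k$; this is a routine but slightly fiddly triangle-inequality bookkeeping that ties the outer index splitting to the inner integral splitting.
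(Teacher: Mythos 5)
Your proposal follows essentially the same route as the paper's proof: bound the error by $\vartheta_z^{-1}$ times the max-product of $|\mathfrak{h}(\cdot)-\mathfrak{h}(z)|$ via Lemma \ref{kerlma} and Lemma \ref{Maxlma1}(3), split both the outer maximum over $k$ and the inner integral over $u$ into near and far parts, and handle these with log-continuity, Lemma \ref{le5}, and Lemma \ref{lma2} respectively; the paper merely anchors both splits at the conditions $|u^{n}e^{-k}|\le e^{n\delta/2}$ and $|z^{n}e^{-k}|\le e^{n\delta/2}$, which moves the triangle-inequality bookkeeping into the doubly-near region rather than the far-in-$u$ region. The only correction needed is the one you already flag yourself: the inner splitting radius must strictly exceed the outer one (with $\tau/2$ for both, $|\log u - k/n|$ is not bounded away from zero), which is precisely the constant adjustment you propose.
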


\begin{proof}
    From Definition \ref{Maxdef1}, we can write 
    \begin{align*}
      \left|\right.&\left.\mathscr{D}_{n,\Phi,\Psi}^{M}(\mathfrak{h})(z) - \mathfrak{h}(z)\right|  = \left| \frac{\bigvee\limits_{k\in \mathbb{Z}} \Phi(z^{n}e^{-k}) n \int\limits_{0}^{\infty} \Psi(u^{n}e^{-k})\mathfrak{h}(u)\frac{du}{u}} {\bigvee\limits_{k\in \mathbb{Z}} \Phi(z^{n}e^{-k}) n \int\limits_{0}^{\infty} \Psi(u^{n}e^{-k}) \frac{du}{u}}  -  \mathfrak{h}(z) \right|\\
        = & \left| \frac{\bigvee\limits_{k\in \mathbb{Z}} \Phi(z^{n}e^{-k}) n \int\limits_{0}^{\infty} \Psi(u^{n}e^{-k}) \mathfrak{h}(u)\frac{du}{u}} {\bigvee\limits_{k\in \mathbb{Z}} \Phi(z^{n}e^{-k}) n \int\limits_{0}^{\infty} \Psi(u^{n}e^{-k}) \frac{du}{u}}  - \frac{\bigvee\limits_{k\in \mathbb{Z}} \Phi(z^{n}e^{-k}) n \int\limits_{0}^{\infty} \Psi(u^{n}e^{-k}) \mathfrak{h}(z)\,\frac{du}{u}} {\bigvee\limits_{k\in \mathbb{Z}} \Phi(z^{n}e^{-k}) n \int\limits_{0}^{\infty} \Psi(u^{n}e^{-k}) \frac{du}{u}} \right|\\
        \le &~ \frac{\bigvee\limits_{k\in \mathbb{Z}} \big|\Phi(z^{n}e^{-k})\big| n \int\limits_{0}^{\infty} \big|\Psi(u^{n}e^{-k})\big||\mathfrak{h}(u)-\mathfrak{h}(z)|\frac{du}{u}} {\left|\bigvee\limits_{k\in \mathbb{Z}} \Phi(z^{n}e^{-k}) n \int\limits_{0}^{\infty} \Psi(u^{n}e^{-k}) \frac{du}{u}\right|}.
    \end{align*}
Using Lemma~\ref{kerlma} and property (3) of  Lemma \ref{Maxlma1}, we obtain
\begin{align*}
   \left|\mathscr{D}_{n,\Phi,\Psi}^{M}(\mathfrak{h})(z) - \mathfrak{h}(z)\right| \leq & \, \frac{1}{\vartheta_{z}}  \bigvee\limits_{k\in \mathbb{Z}} \Phi(z^{n}e^{-k})\quad n \int\limits_{0}^{\infty} \Psi(u^{n}e^{-k})\,\,\left|  (\mathfrak{h}(u)-\mathfrak{h}(z))\right| \, \frac{du}{u}.
\end{align*}
Let $\varepsilon>0$. By the log-continuity of $\mathfrak{h}$ at $z$, there exists $\delta(\epsilon)>0$ such that $$|\log(u)-\log(z)|=\log\left(\frac{u}{z}\right)<\delta\implies |h(u)-h(z)|<\epsilon.$$ Hence,
\begin{align*}
    \left|\right.&\left.\mathscr{D}_{n,\Phi,\Psi}^{M}(\mathfrak{h})(z) - \mathfrak{h}(z)\right| \leq  \frac{1}{\vartheta_{z}}  \bigvee\limits_{k\in \mathbb{Z}} \big|\Phi(z^{n}e^{-k})\big| n \int\limits_{0}^{\infty} \big|\Psi(u^{n}e^{-k})\big|\left|  (\mathfrak{h}(u)-\mathfrak{h}(z))\right|  \frac{du}{u}.\\
    \leq & \max \left\{\frac{1}{\vartheta_{z}}\bigvee_{\substack{k\in \mathbb{Z}\\ |z^{n}e^{-k}|\leq e^\frac{n\delta}{2} }} \big|\Phi(z^{n}e^{-k})\big| n \int\limits_{0}^{\infty} \big|\Psi(u^{n}e^{-k})\big|\left|  (\mathfrak{h}(u)-\mathfrak{h}(z))\right| \frac{du}{u}, \right. \\ &\left.\frac{1}{\vartheta_{z}}\bigvee_{|z^{n}e^{-k}|> e^ \frac{n\delta}{2} }  \big|\Phi(z^{n}e^{-k})\big| n \int\limits_{0}^{\infty} \big|\Psi(u^{n}e^{-k})\big|\left|  (\mathfrak{h}(u)-\mathfrak{h}(z))\right|\frac{du}{u}\right\}\\
    = & \max\{D_1,D_2\}. \quad \text{(say)}
\end{align*}


Estimation of $D_1$. For sufficiently large $n$, using the log-continuity of $\mathfrak{h}$ and Lemma \ref{le5} (with $u^{n} e^{-k}=t $), we get
\begin{align*}
    D_1=&\frac{1}{\vartheta_{z}}\bigvee_{\substack{k\in \mathbb{Z}\\ |z^{n}e^{-k}|\leq e^\frac{n\delta}{2} }} \big|\Phi(z^{n}e^{-k})\big| n \int\limits_{0}^{\infty} \big|\Psi(u^{n}e^{-k})\big|\left|  (\mathfrak{h}(u)-\mathfrak{h}(z))\right|\frac{du}{u}\\
    \leq & \frac{1}{\vartheta_{z}}\bigvee_{\substack{k\in \mathbb{Z}\\ |z^{n}e^{-k}|\leq e^\frac{n\delta}{2} }} \big|\Phi(z^{n}e^{-k})\big| n \int\limits_{|u^{n}e^{-k}|\leq e^\frac{n\delta}{2}} \big|\Psi(u^{n}e^{-k})\big|\left|  (\mathfrak{h}(u)-\mathfrak{h}(z))\right| \frac{du}{u}\\ & + \frac{1}{\vartheta_{z}}\bigvee_{\substack{k\in \mathbb{Z}\\ |z^{n}e^{-k}|\leq e^\frac{n\delta}{2} }} \big|\Phi(z^{n}e^{-k})\big| n \int\limits_{|u^{n}e^{-k}|> e^\frac{n\delta}{2}} \big|\Psi(u^{n}e^{-k})\big|\left|  (\mathfrak{h}(u)-\mathfrak{h}(z))\right|  \frac{du}{u}\\
    \leq & \frac{\epsilon}{\vartheta_{z}} \bigvee_{k\in \mathbb{Z}} |\Phi(z^{n}e^{-k})| \int\limits_{|t | <e^\frac{n\delta}{2}} |\Psi(t)| \frac{dt}{t}+\frac{2 \|h\|_{\infty}}{\vartheta_{z}}\bigvee_{k\in \mathbb{Z}} |\Phi(z^{n}e^{-k})|\int\limits_{|t | > e^\frac{n\delta}{2}} |\Psi(t)| \frac{dt}{t}\\
    \leq & \frac{\epsilon}{\vartheta_{z}}\mathfrak{m}_{0}(\Phi)\mathrm{M}_{0}(\Psi) + \frac{2\epsilon\|\mathfrak{h}\|_{\infty}}{\vartheta_{z}}\mathfrak{m}_{0}(\Phi).
\end{align*}

Estimation of $D_2$. Using $(\Psi.2)$ and Lemma \ref{lma2}, we obtain
\begin{align*}
         D_2=& \frac{1}{\vartheta_{z}}\bigvee_{|z^{n}e^{-k}|> e^ \frac{n\delta}{2} }  \big|\Phi(z^{n}e^{-k})\big|\quad n \int\limits_{0}^{\infty} \big|\Psi(u^{n}e^{-k})\big|\left|  (\mathfrak{h}(u)-\mathfrak{h}(z))\right| \frac{du}{u} \\
         \leq & \frac{2 \|h\|_{\infty}}{\vartheta_{z}} \bigvee_{|z^{n}e^{-k}|> e^ \frac{n\delta}{2} } \big|\Phi(z^{n}e^{-k})\big|\quad n \int\limits_{0}^{\infty} \big|\Psi(u^{n}e^{-k})\big|  \frac{du}{u}\\
         \leq & \frac{2\epsilon \|h\|_{\infty}}{\vartheta_{z}} \mathrm{M}_{0}(\Psi).
\end{align*}


Combining the above estimates for $D_1$ and $D_2$, we obtain the desired result:
 \[ \lim\limits_{n\to \infty} \mathscr{D}_{n,\Phi, \Psi}^{M}\,(\mathfrak{h}) (z) = \mathfrak{h}(z) \]
 Moreover, if $\mathfrak{h} \in \mathscr{LU}_{b}(I)$, then the convergence is uniform, completing the proof.
\end{proof}

\begin{definition}\label{LMSDef}(see \cite{Ang1})
   For $\mathrm{h} \in \mathscr{LU}_{b}(\mathbb{R}_{+})$, we define the logarithmic modulus of continuity by
   \[{\mho}_{\mathscr{I}}(\mathrm{h},\varsigma):=\sup\limits_{s,t \in \mathbb{R}_{+}} \{\left|\mathrm{h}(s) - \mathrm{h}(t)\right| : | \log{s}- \log{t}| \leq \varsigma, \varsigma >0 \}.\]
  Some fundamental properties of ${\mho}_{\mathbb{R}_{+}}(\mathrm{h},\varsigma)$ are as follows:
   \begin{enumerate}
       \item[(a)] $\lim\limits_{\varsigma \to 0}{\mho}_{\mathbb{R}_{+}}(\mathrm{h},\varsigma) \to 0 $.
       \item[(b)] $ {\mho}_{\mathbb{R}_{+}}(\mathrm{h},\beta \varsigma)\leq (\beta +1) {\mho}_{\mathbb{R}_{+}}(\mathrm{h},\varsigma)$, for all $\beta > 0$.
       \item[(c)] $ \left|\mathrm{h}(s) - \mathrm{h}(t)\right| \leq {\mho}_{\mathbb{R}_{+}}(\mathrm{h},\varsigma) \left( 1 + \frac{\| \log{s} - \log{t}\|}{\varsigma}\right),$ for all $ s,t \in \mathbb{R}_{+}.$
   \end{enumerate}
\end{definition}

\begin{theorem}
     Let $\Phi\in\widetilde{\phi}$ and $\Psi\in\widetilde{\psi}$ satisfy $\mathfrak{m}_{1}(\Phi)<\infty$ and $\mathrm{M}_{1}(\Psi)<\infty$. If $h\in\mathscr{LU}_{b}(I)$, then for every $\varsigma>0$, the following inequality holds:      
    \begin{align*} 
    \left|\right.&\left. \mathscr{D}_{n,\Phi,\psi}^{M}(\mathrm{h})(z)- \mathrm{h}(z)\right|\\ &\qquad \leq  \frac{\mho_{\mathscr{I}}(h,\varsigma)}{\vartheta_{z}}  \mathfrak{m}_{0}(\Phi)\cdot\mathrm{M}_{0}(\Psi) + \frac{\mho_{\mathscr{I}}(h,\varsigma)}{n \, \vartheta_{z}\cdot \varsigma} \left\{ \mathfrak{m}_{0}(\Phi)\cdot \mathrm{M}_{1}(\Psi) + \mathfrak{m}_{1}(\Phi)\cdot \mathrm{M}_{0}(\Psi) \right\}.  \end{align*} 
\end{theorem}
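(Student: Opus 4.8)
The plan is to run the same opening as in Theorem~\ref{maxthm1} and then replace the log-continuity argument by the quantitative estimate (c) of Definition~\ref{LMSDef}. Concretely, from Definition~\ref{Maxdef1}, Lemma~\ref{kerlma}, and property (3) of Lemma~\ref{Maxlma1} one obtains
\[
\left|\mathscr{D}_{n,\Phi,\Psi}^{M}(\mathrm{h})(z)-\mathrm{h}(z)\right|
\le \frac{1}{\vartheta_{z}}\bigvee_{k\in\mathbb{Z}}\big|\Phi(z^{n}e^{-k})\big|\; n\int_{0}^{\infty}\big|\Psi(u^{n}e^{-k})\big|\,|\mathrm{h}(u)-\mathrm{h}(z)|\,\frac{du}{u}.
\]
Applying property (c) to bound $|\mathrm{h}(u)-\mathrm{h}(z)|\le \mho_{\mathscr{I}}(h,\varsigma)\bigl(1+\varsigma^{-1}|\log u-\log z|\bigr)$ and distributing this over the integral splits the right-hand side into a main part carrying the factor $1$ and a remainder part carrying the factor $\varsigma^{-1}|\log u-\log z|$.

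For the main part I would substitute $t=u^{n}e^{-k}$, so that $\log t=n\log u-k$ and $n\,du/u=dt/t$; then $n\int_{0}^{\infty}|\Psi(u^{n}e^{-k})|\,du/u=\mathrm{M}_{0}(\Psi)$ independently of $k$, while $\bigvee_{k}|\Phi(z^{n}e^{-k})|\le\mathfrak{m}_{0}(\Phi)$ directly from the definition of the discrete absolute moment (with $z$ replaced by $z^{n}$). This produces the term $\vartheta_{z}^{-1}\mho_{\mathscr{I}}(h,\varsigma)\,\mathfrak{m}_{0}(\Phi)\,\mathrm{M}_{0}(\Psi)$.

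For the remainder part the key identity, under the same substitution, is $\log u-\log z=n^{-1}\bigl(\log t-(n\log z-k)\bigr)$, whence $|\log u-\log z|\le n^{-1}\bigl(|\log t|+|n\log z-k|\bigr)$ and therefore
\[
n\int_{0}^{\infty}\big|\Psi(u^{n}e^{-k})\big|\,|\log u-\log z|\,\frac{du}{u}\le \frac{1}{n}\Bigl(\mathrm{M}_{1}(\Psi)+|n\log z-k|\,\mathrm{M}_{0}(\Psi)\Bigr).
\]
Substituting this back, using the sublinearity $\bigvee_{k}(a_{k}+b_{k})\le\bigvee_{k}a_{k}+\bigvee_{k}b_{k}$, and recognizing $\bigvee_{k}|\Phi(z^{n}e^{-k})|\le\mathfrak{m}_{0}(\Phi)$ together with $\bigvee_{k}|\Phi(z^{n}e^{-k})|\,|n\log z-k|\le\mathfrak{m}_{1}(\Phi)$ yields the term $(n\vartheta_{z}\varsigma)^{-1}\mho_{\mathscr{I}}(h,\varsigma)\bigl(\mathfrak{m}_{0}(\Phi)\mathrm{M}_{1}(\Psi)+\mathfrak{m}_{1}(\Phi)\mathrm{M}_{0}(\Psi)\bigr)$. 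Adding the two contributions gives the claimed inequality.

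I expect the only delicate point to be the bookkeeping across the substitution $t=u^{n}e^{-k}$ combined with the max-product structure: the triangle-inequality split of $|\log u-\log z|$ must be performed \emph{inside} the $k$-indexed supremum, and the sublinearity of $\bigvee$ must be applied carefully, since — unlike the linear Durrmeyer operator — there is no additivity to lean on. Everything else is a routine moment estimate of the type already carried out in Theorem~\ref{maxthm1}.
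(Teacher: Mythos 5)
Your proposal is correct and follows essentially the same route as the paper: the same opening reduction via Lemma~\ref{kerlma} and property (3) of Lemma~\ref{Maxlma1}, the same use of the modulus of continuity to produce the factor $\bigl(1+\varsigma^{-1}|\log u-\log z|\bigr)$ (the paper invokes property (b) of Definition~\ref{LMSDef} where you invoke the equivalent property (c)), the same split into a main and a remainder term, and the same substitution $t=u^{n}e^{-k}$ with the triangle inequality $|\log u-\log z|\le n^{-1}(|\log t|+|n\log z-k|)$ to extract $\mathfrak{m}_{0}(\Phi)\mathrm{M}_{1}(\Psi)+\mathfrak{m}_{1}(\Phi)\mathrm{M}_{0}(\Psi)$. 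The delicate point you flag — applying the sublinearity of $\bigvee$ inside the $k$-indexed supremum — is exactly how the paper passes from the combined bound to $E_{1}+E_{2}$.
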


\begin{proof}
     Using Definition \ref{Maxdef1}, Lemma \ref{kerlma} and properties (3) of Lemma  \ref{Maxlma1}  we have
     \begin{align*}
        \left|\mathscr{D}_{n,\Phi,\psi}^{M}(\mathrm{h})(z) - \mathrm{h}(z)\right| \leq & \frac{1}{\vartheta_{z}}  \bigvee\limits_{k\in \mathbb{Z}} |\Phi(z^{n}e^{-k})| \quad n \int\limits_{0}^{\infty} |\psi(u^{n}e^{-k})|\left|  (\mathrm{h}(u)-\mathrm{h}(z))\right| \frac{du}{u}\\
        \leq & \frac{1}{\vartheta_{z}}  \bigvee\limits_{k\in \mathbb{Z}} |\Phi(z^{n}e^{-k})| n \int\limits_{0}^{\infty} |\psi(u^{n}e^{-k})|\mho_{\mathscr{I}}\left(h, \left|\log\left(\frac{u}{z}\right)\right|\right) \frac{du}{u}.
     \end{align*} 
     Now, for any $\varsigma > 0$, applying the property (b) of Definition \ref{LMSDef}, we obtain  
     \begin{align*}
        \left| \right.&\left.\mathscr{D}_{n,\Phi,\psi}^{M}(\mathrm{h})(z)- \mathrm{h}(z)\right| \\&\quad\leq  \frac{1}{\vartheta_{z}}  \bigvee\limits_{k\in \mathbb{Z}} |\Phi(z^{n}e^{-k})|  n \int\limits_{0}^{\infty} |\psi(u^{n}e^{-k})| \left( 1 + \frac{|\log\left(\frac{u}{z}\right)|}{\varsigma}\right)  \mho_{\mathscr{I}}(h,\varsigma)  \frac{du}{u}\\
         &\quad\leq E_{1} + E_{2}.
     \end{align*} 
     For $E_{1}$, we have 
     \begin{align*}
         E_{1} &=  \frac{1}{\vartheta_{z}}  \bigvee\limits_{k\in \mathbb{Z}} |\Phi(z^{n}e^{-k})|  n \int\limits_{0}^{\infty} |\psi(u^{n}e^{-k})| \mho_{\mathscr{I}}(h,\varsigma)  \frac{du}{u}\\
          &= \frac{\mho_{\mathscr{I}}(h,\varsigma)}{\vartheta_{z}}  \bigvee\limits_{k\in \mathbb{Z}} |\Phi(z^{n}e^{-k})| \quad n \int\limits_{0}^{\infty} |\psi(u^{n}e^{-k})| \frac{du}{u}\\
        & \leq  \frac{\mho_{\mathscr{I}}(h,\varsigma)}{\vartheta_{z}}  \mathfrak{m}_{0}(\Phi)\cdot\mathrm{M}_{0}(\Psi).
     \end{align*}
     Next, for $E_{2}$ we get
     \begin{align*}
         E_{2} &= \frac{1}{\vartheta_{z}}  \bigvee\limits_{k\in \mathbb{Z}} |\Phi(z^{n}e^{-k})| n \int\limits_{0}^{\infty} |\psi(u^{n}e^{-k})| \left(\frac{|\log\left(\frac{u}{z}\right)|}{\varsigma}\right) \mho_{\mathscr{I}}(h,\varsigma)  \frac{du}{u}\\
          &= \frac{\mho_{\mathscr{I}}(h,\varsigma)}{\vartheta_{z}\cdot \varsigma} \bigvee\limits_{k\in \mathbb{Z}} |\Phi(z^{n}e^{-k})| n \int\limits_{0}^{\infty} |\psi(u^{n}e^{-k})||\log\left(\frac{u}{z}\right)|  \frac{du}{u}.\\
     \end{align*}
      With the substitution $u^{n}e^{-k} = t$, we obtain 
      \begin{align*}
         E_{2}
         & = \frac{\mho_{\mathscr{I}}(h,\varsigma)}{\vartheta_{z}\cdot \varsigma} \bigvee\limits_{k\in \mathbb{Z}} |\Phi(z^{n}e^{-k})|  \int\limits_{0}^{\infty} |\psi(t)|\left|\log\left(\frac{te^{k}}{z^{n}}\right)^{\frac{1}{n}}\right| \frac{dt}{t}\\
         & \leq \frac{\mho_{\mathscr{I}}(h,\varsigma)}{n  \vartheta_{z}\cdot \varsigma} \bigvee\limits_{k\in \mathbb{Z}} |\Phi(z^{n}e^{-k})|  \int\limits_{0}^{\infty} |\psi(t)|\left\{ |\log{t}| + |k-n\log{z}|\right\}\frac{dt}{t}\\
         & \leq \frac{\mho_{\mathscr{I}}(h,\varsigma)}{n  \vartheta_{z}\cdot \varsigma} \left[ \mathfrak{m}_{0}(\Phi)\cdot \mathrm{M}_{1}(\Psi) +\mathfrak{m}_{1}(\Phi)\cdot \mathrm{M}_{0}(\Psi) \right].
     \end{align*}
     Combining the estimates for $E_{1}$ and $E_{2}$ gives the desired result.
\end{proof}

\section{The Max-Min Durrmeyer type Exponential sampling operators}\label{sec4}
In this section, we examine the pointwise and uniform convergence properties and the rate of approximation of the Max–Min Durrmeyer-type exponential sampling operators.
\begin{definition}\label{Maxmin}
     Let $\mathfrak{h}: \mathbb{R}_{+} \rightarrow [0,1]$ be an $\mathscr{L}^1$-integrable and bounded function on $\mathbb{R}_{+}$. Then, the Max-Min kind Durrmeyer-type exponential sampling operators for $\mathfrak{h}$ with respect to $\Phi$ and $\Psi$ are defined as
     \[ \mathscr{D}_{n,\Phi, \Psi}^{m}\,(\mathfrak{h}) (z) := \bigvee\limits_{k\in \mathbb{Z}}   \left[ n \int\limits_{0}^{\infty} \Psi(u^{n}e^{-k}) \mathfrak{h}(u)\frac{du}{u}\right] \wedge \frac{\Phi(z^{n}e^{-k})}{\bigvee\limits_{k\in \mathbb{Z}} \Phi(z^{n}e^{-k}) n \int\limits_{0}^{\infty} \Psi(u^{n}e^{-k}) \frac{du}{u}}. \] 
\end{definition}
It is easy to  observe that $\bigl| \mathscr{D}_{n,\Phi, \Psi}^{m}\,(\mathfrak{h}) (z) \bigr| \leq 1 $. Hence  the operator is well defined.
\begin{lemma}\label{lm1MaxMin} 
 Let $\mathfrak{h}_{1}, \mathfrak{h}_{2} : \mathbb{R}_{+} \to [0,1]$ be two bounded functions. Then, for all $ z\in\mathbb{R}_{+}$, the following properties hold:
     \begin{enumerate}
         \item[(a)] If $\mathfrak{h}_{2}(z) \geq \mathfrak{h}_{1}(z)$, then $$ \mathscr{D}_{n,\Phi, \Psi}^{m}(\mathfrak{h}_{2}) (z) \geq\mathscr{D}_{n,\Phi, \Psi}^{m}(\mathfrak{h}_{1}) (z).$$ 
         \item[(b)]For sufficiently large $n\in\mathbb{N}$, $$\left|\mathscr{D}_{n,\Phi, \Psi}^{m}(\mathfrak{h}_{1}) (z) - \mathscr{D}_{n,\Phi, \Psi}^{m}(\mathfrak{h}_{2}) (z)\right| \leq \mathscr{D}_{n,\Phi, \Psi}^{m}(|\mathfrak{h}_{1} - \mathfrak{h}_{2}|;z).$$
         \item[(c)]For all $z \in \mathbb{R}_{+}$, $$\mathscr{D}_{n,\Phi, \Psi}^{m}\,(\mathfrak{h}_{1} + \mathfrak{h}_{2}) (z)\leq \mathscr{D}_{n,\Phi, \Psi}^{m}(\mathfrak{h}_{1}) (z)+ \mathscr{D}_{n,\Phi, \Psi}^{m}(\mathfrak{h}_{2}) (z).$$    
     \end{enumerate}
\end{lemma}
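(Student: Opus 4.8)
The plan is to strip the operator down to an abstract ``$\sup$--$\min$'' form and then apply the elementary inequalities of Lemmas~\ref{lm5}--\ref{lm7} term by term in the index $k$. For $k\in\mathbb Z$, $z\in\mathbb R_{+}$ and $n\in\mathbb N$ set
\[
\alpha_{k}(\mathfrak h):=n\int_{0}^{\infty}\Psi(u^{n}e^{-k})\,\mathfrak h(u)\,\frac{du}{u},
\qquad
\beta_{k}(z):=\frac{\Phi(z^{n}e^{-k})}{\bigvee_{j\in\mathbb Z}\Phi(z^{n}e^{-j})\,n\int_{0}^{\infty}\Psi(u^{n}e^{-j})\,\frac{du}{u}},
\]
so that $\mathscr D^{m}_{n,\Phi,\Psi}(\mathfrak h)(z)=\bigvee_{k\in\mathbb Z}\bigl(\alpha_{k}(\mathfrak h)\wedge\beta_{k}(z)\bigr)$. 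The first step is to record the ``admissibility'' facts that license the lemmas. Since $\Psi\ge 0$ and, after the substitution $t=u^{n}e^{-k}$, $(\Psi.1)$ gives $\alpha_{k}(\mathbf 1)=1$, the functional $\mathfrak h\mapsto\alpha_{k}(\mathfrak h)$ is nonnegative, additive and monotone, and $0\le\alpha_{k}(\mathfrak h)\le 1$ for every $\mathfrak h:\mathbb R_{+}\to[0,1]$; likewise $\beta_{k}(z)\in[0,1]$ because its numerator never exceeds $\bigvee_{j}\Phi(z^{n}e^{-j})$, which by Lemma~\ref{kerlma} equals the denominator and is $\ge\vartheta_{z}>0$. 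This strict positivity of the denominator (for which the hypothesis ``$n$ sufficiently large'' is invoked, together with $(\Phi.2)$ and Lemma~\ref{kerlma}) is what makes the whole expression well defined and finite.

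For part \textbf{(a)}, monotonicity of $\alpha_{k}$ gives $\alpha_{k}(\mathfrak h_{1})\le\alpha_{k}(\mathfrak h_{2})$ for each $k$; since $t\mapsto t\wedge\beta_{k}(z)$ is nondecreasing and $\bigvee$ preserves order, $\mathscr D^{m}_{n,\Phi,\Psi}(\mathfrak h_{1})(z)\le\mathscr D^{m}_{n,\Phi,\Psi}(\mathfrak h_{2})(z)$ follows at once. For part \textbf{(b)}, I would begin with Lemma~\ref{lm5}, which bounds $\mathscr D^{m}_{n,\Phi,\Psi}(\mathfrak h_{1})(z)-\mathscr D^{m}_{n,\Phi,\Psi}(\mathfrak h_{2})(z)$ by $\bigvee_{k}\bigl|(\alpha_{k}(\mathfrak h_{1})\wedge\beta_{k}(z))-(\alpha_{k}(\mathfrak h_{2})\wedge\beta_{k}(z))\bigr|$; then Lemma~\ref{lm6}, applied with $\mathfrak r=\beta_{k}(z)$, $\mathfrak p=\alpha_{k}(\mathfrak h_{1})$, $\mathfrak q=\alpha_{k}(\mathfrak h_{2})$ (all in $[0,1]$), replaces each term by $\beta_{k}(z)\wedge|\alpha_{k}(\mathfrak h_{1})-\alpha_{k}(\mathfrak h_{2})|$. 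The key intermediate estimate is $|\alpha_{k}(\mathfrak h_{1})-\alpha_{k}(\mathfrak h_{2})|=|\alpha_{k}(\mathfrak h_{1}-\mathfrak h_{2})|\le\alpha_{k}(|\mathfrak h_{1}-\mathfrak h_{2}|)\le 1$, valid precisely because $\Psi\ge 0$; inserting this into the (monotone) $\wedge$ and taking $\bigvee_{k}$ yields $\mathscr D^{m}_{n,\Phi,\Psi}(\mathfrak h_{1})(z)-\mathscr D^{m}_{n,\Phi,\Psi}(\mathfrak h_{2})(z)\le\mathscr D^{m}_{n,\Phi,\Psi}(|\mathfrak h_{1}-\mathfrak h_{2}|)(z)$, and interchanging $\mathfrak h_{1}$ and $\mathfrak h_{2}$ gives the reverse inequality, hence the absolute-value bound.

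For part \textbf{(c)}, additivity of $\alpha_{k}$ gives $\alpha_{k}(\mathfrak h_{1}+\mathfrak h_{2})=\alpha_{k}(\mathfrak h_{1})+\alpha_{k}(\mathfrak h_{2})$, and Lemma~\ref{lm7} with $\mathfrak r=\alpha_{k}(\mathfrak h_{1})$, $\mathfrak p=\alpha_{k}(\mathfrak h_{2})$, $\mathfrak q=\beta_{k}(z)$ (all $\ge 0$) yields, for each $k$,
\[
\bigl(\alpha_{k}(\mathfrak h_{1})+\alpha_{k}(\mathfrak h_{2})\bigr)\wedge\beta_{k}(z)\le\bigl(\alpha_{k}(\mathfrak h_{1})\wedge\beta_{k}(z)\bigr)+\bigl(\alpha_{k}(\mathfrak h_{2})\wedge\beta_{k}(z)\bigr);
\]
taking $\bigvee_{k}$ and using the subadditivity of the supremum, $\bigvee_{k}(x_{k}+y_{k})\le\bigvee_{k}x_{k}+\bigvee_{k}y_{k}$, finishes the proof. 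The computations are all short; the only point requiring care --- the ``main obstacle'' such as it is --- is the bookkeeping of ranges, namely checking at each application that the arguments of the $\min$ operations stay in $[0,1]$ (for Lemma~\ref{lm6}) or in $[0,\infty)$ (for Lemma~\ref{lm7}) and that the normalizing denominator is strictly positive. This is exactly where the standing assumptions $\Psi\ge 0$, $(\Psi.1)$, $(\Phi.2)$/Lemma~\ref{kerlma}, and ``$n$ large'' enter; if $\Psi$ were allowed to change sign, the $\alpha_{k}$ would lose monotonicity and the inequality $|\alpha_{k}(\mathfrak h_{1}-\mathfrak h_{2})|\le\alpha_{k}(|\mathfrak h_{1}-\mathfrak h_{2}|)$ used in part (b) would fail, so this positivity is essential rather than cosmetic.
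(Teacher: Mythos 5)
Your proposal is correct and follows essentially the same route as the paper, which simply declares (a) and (c) immediate from the definition and Lemma~\ref{lm7} and handles (b) by the triangle inequality with the roles of $\mathfrak{h}_1,\mathfrak{h}_2$ interchanged; your $\alpha_k$/$\beta_k$ bookkeeping via Lemmas~\ref{lm5}--\ref{lm7} is just the fully written-out version of that argument. Your observation that the nonnegativity of $\Psi$ (and of $\Phi$) is needed for the monotonicity of $\alpha_k$, for the bound $|\alpha_k(\mathfrak{h}_1-\mathfrak{h}_2)|\le\alpha_k(|\mathfrak{h}_1-\mathfrak{h}_2|)$, and for the arguments of $\wedge$ to lie in $[0,1]$ is a fair point: the paper leaves this assumption implicit in $(\Psi.1)$--$(\Psi.2)$ and $(\Phi.2)$, and you are right that it is essential rather than cosmetic.
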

\begin{proof}
   From the definition of operators $ \mathscr{D}_{n,\Phi, \Psi}^{m}$ and Lemma~\ref{lm7}, (a) and (c) are obvious. For (c) we use the inequality $ \mathfrak{h}_{1}(s)-\mathfrak{h}_{2}(s)\leq \bigl|\mathfrak{h}_{1}(s)-\mathfrak{h}_{2}(s)\bigr|$ and then interchanging the role of $\mathfrak{h}_{1} \,\, \And\,\,  \mathfrak{h}_{2}$ we get the desired inequality. 
\end{proof}

\begin{theorem}\label{Kantthm1}
Let $\mathfrak{h}:I \to [0,1]$ be a bounded and $\mathscr{L}^1$-integrable function. Then, \[ \lim\limits_{n\to \infty} \mathscr{D}_{n,\Phi, \Psi}^{m}\,(\mathfrak{h}) (z) = \mathfrak{h}(z) \] holds at every point $z\in\mathbb{R}_{+},$ where $\mathfrak{h}$ is log-continuous.

Furthermore, if $\mathfrak{h} \in \mathscr{LU}_{b}(I)$, then
\[\lim\limits_{n\to \infty} \left\|\mathscr{D}_{n,\Phi, \Psi}^{m}\,(\mathfrak{h}) (z) - \mathfrak{h}(z)\right\|_{\infty} = 0, \] that is, the convergence is uniform.
\end{theorem}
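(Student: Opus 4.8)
The plan is to follow the architecture of the proof of Theorem~\ref{maxthm1}, replacing the quotient structure by the lattice operations and invoking Lemmas~\ref{lm5} and \ref{lm6} in place of the elementary inequalities used there. First I would simplify $\mathscr{D}^{m}_{n,\Phi,\Psi}$: by $(\Psi.1)$ and the substitution $t=u^{n}e^{-k}$ one has $n\int_{0}^{\infty}\Psi(u^{n}e^{-k})\,\frac{du}{u}=1$, so the normalising denominator in Definition~\ref{Maxmin} is just $D(z):=\bigvee_{j\in\mathbb{Z}}\Phi(z^{n}e^{-j})$, which by Lemma~\ref{lm3}(2) satisfies $D(z)\geq\vartheta_{z}>0$ and, because $(\Phi.1)$ forces $\Phi(z^{n}e^{-k})\to0$ as $|k|\to\infty$, is attained at a finite index. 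Setting
\[
r_{k}:=\frac{\Phi(z^{n}e^{-k})}{D(z)}\in[0,1],\qquad
A_{k}:=n\int_{0}^{\infty}\Psi(u^{n}e^{-k})\,\mathfrak{h}(u)\,\frac{du}{u}=\int_{0}^{\infty}\Psi(t)\,\mathfrak{h}\!\big((te^{k})^{1/n}\big)\,\frac{dt}{t}\in[0,1],
\]
we have $\mathscr{D}^{m}_{n,\Phi,\Psi}(\mathfrak{h})(z)=\bigvee_{k\in\mathbb{Z}}\big(A_{k}\wedge r_{k}\big)$; and since $\bigvee_{k}r_{k}=1$ is attained while $\mathfrak{h}(z)\in[0,1]$, the target value is reproduced as $\mathfrak{h}(z)=\bigvee_{k\in\mathbb{Z}}\big(\mathfrak{h}(z)\wedge r_{k}\big)$.

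Applying Lemma~\ref{lm5} (twice, to handle the absolute value) and then Lemma~\ref{lm6} with $\mathfrak{r}=r_{k}$, $\mathfrak{p}=A_{k}$, $\mathfrak{q}=\mathfrak{h}(z)$, all in $[0,1]$, gives
\[
\bigl|\mathscr{D}^{m}_{n,\Phi,\Psi}(\mathfrak{h})(z)-\mathfrak{h}(z)\bigr|
\le\bigvee_{k\in\mathbb{Z}}\bigl|(A_{k}\wedge r_{k})-(\mathfrak{h}(z)\wedge r_{k})\bigr|
\le\bigvee_{k\in\mathbb{Z}}\big(r_{k}\wedge|A_{k}-\mathfrak{h}(z)|\big).
\]
Fix $\varepsilon>0$, let $\delta=\delta(\varepsilon)>0$ be supplied by the log-continuity of $\mathfrak{h}$ at $z$, and split the supremum into the indices with $|k/n-\log z|\le\delta/2$ and those with $|k/n-\log z|>\delta/2$. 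On the latter set I would drop the factor $|A_{k}-\mathfrak{h}(z)|\le 1$ and use $r_{k}\le\vartheta_{z}^{-1}\Phi(z^{n}e^{-k})$, so that Lemma~\ref{lma2} (applied with $z^{n}$ in place of $z$ and $\varsigma=\delta/2$) gives $\bigvee_{|k/n-\log z|>\delta/2}\big(r_{k}\wedge|A_{k}-\mathfrak{h}(z)|\big)\le\vartheta_{z}^{-1}\bigvee_{|k-n\log z|>n\delta/2}\Phi(z^{n}e^{-k})\to0$ as $n\to\infty$.

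On the near set I instead drop $r_{k}\le1$ and estimate $|A_{k}-\mathfrak{h}(z)|$ directly. Using $(\Psi.1)$, $A_{k}-\mathfrak{h}(z)=\int_{0}^{\infty}\Psi(t)\big[\mathfrak{h}((te^{k})^{1/n})-\mathfrak{h}(z)\big]\frac{dt}{t}$; since $|\log((te^{k})^{1/n})-\log z|\le\frac1n|\log t|+\frac1n|k-n\log z|\le\frac1n|\log t|+\frac{\delta}{2}$, the part of the integral over $|\log t|\le n\delta/2$ is $<\varepsilon\,\mathrm{M}_{0}(\Psi)$ by log-continuity, while the tail over $|\log t|>n\delta/2$ is at most $2\|\mathfrak{h}\|_{\infty}\int_{|\log t|>n\delta/2}|\Psi(t)|\frac{dt}{t}$, which tends to $0$ by Lemma~\ref{le5} (equivalently, is $\le 2\|\mathfrak{h}\|_{\infty}\mathrm{M}_{1}(\Psi)(n\delta/2)^{-1}$ when $\mathrm{M}_{1}(\Psi)<\infty$). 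Taking the maximum of the two regimes, then $n\to\infty$ and $\varepsilon\to0$, yields $\mathscr{D}^{m}_{n,\Phi,\Psi}(\mathfrak{h})(z)\to\mathfrak{h}(z)$. For the uniform statement, when $\mathfrak{h}\in\mathscr{LU}_{b}(I)$ the $\delta$ above may be chosen independently of $z$, and $\vartheta_{z}$ is bounded below by a positive constant on the compact $I$, so every bound becomes uniform in $z\in I$.

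The main obstacle is the index splitting together with the observation that $A_{k}\wedge r_{k}$ is small in each regime for a \emph{different} reason: for indices far from $n\log z$ the integral $A_{k}$ samples $\mathfrak{h}$ near $e^{k/n}\neq z$, so $|A_{k}-\mathfrak{h}(z)|$ need not be small and one must exploit the off-diagonal decay of $\Phi$ through Lemma~\ref{lma2}; for indices near $n\log z$ one controls $|A_{k}-\mathfrak{h}(z)|$ instead, which brings in the additional task of absorbing the tail of $\Psi$. A secondary but essential point is that the reproduction identity $\mathfrak{h}(z)=\bigvee_{k}(\mathfrak{h}(z)\wedge r_{k})$ requires $\sup_{k}r_{k}=1$ to be attained — hence the use of $(\Phi.1)$ — and that $\Phi,\Psi\ge0$ and $\mathfrak{h}\in[0,1]$ (so that $r_{k},A_{k}\in[0,1]$ and Lemma~\ref{lm6} applies) are implicitly in force in the max–min setting.
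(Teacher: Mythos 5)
Your proposal is correct and follows essentially the same route as the paper: both reduce the error to $\bigvee_{k}\bigl(r_{k}\wedge|A_{k}-\mathfrak{h}(z)|\bigr)$ via Lemmas~\ref{lm5} and~\ref{lm6}, split the indices into those near and far from $n\log z$, treat the far set with Lemma~\ref{lma2} and the near set with log-continuity, and dispose of the constant-reproduction term (the paper's $\mathsf{E}_{2}=0$ step is exactly your identity $\mathfrak{h}(z)=\bigvee_{k}(\mathfrak{h}(z)\wedge r_{k})$). The only substantive difference is that on the near set you further split the $\Psi$-integral and absorb its tail via Lemma~\ref{le5}, which is more careful than the paper's one-line estimate of $\mathsf{E}_{1.1}$ and in fact tightens a step the paper glosses over.
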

\begin{proof}
   Let $\mathfrak{h}:I \to [0,1]\,$ be log-continuous at $z\in \mathbb{R}_{+}$. By the triangle inequality, we have
    \begin{align*}
      \left|\right.&\left.\mathscr{D}_{n,\Phi, \Psi}^{m}\,(\mathfrak{h}) (z) - \mathfrak{h}(z)\right|\\  & = \left| \bigvee\limits_{k\in \mathbb{Z}}   \left[ n \int\limits_{0}^{\infty} \Psi(u^{n}e^{-k}) \mathfrak{h}(u)\frac{du}{u}\right] \wedge \frac{\Phi(z^{n}e^{-k})}{\bigvee\limits_{k\in \mathbb{Z}} \Phi(z^{n}e^{-k}) n \int\limits_{0}^{\infty} \Psi(u^{n}e^{-k})\frac{du}{u}}  - \mathfrak{h}(z)\right|\\
        &\leq  \left| \bigvee\limits_{k\in \mathbb{Z}}   \left[ n \int\limits_{0}^{\infty} \Psi(u^{n}e^{-k}) \mathfrak{h}(u)\frac{du}{u}\right] \wedge \frac{\Phi(z^{n}e^{-k})}{\bigvee\limits_{k\in \mathbb{Z}} \Phi(z^{n}e^{-k}) n \int\limits_{0}^{\infty} \Psi(u^{n}e^{-k}) \frac{du}{u}} \right. \\ & \left. \quad -\bigvee\limits_{k\in \mathbb{Z}}   \left[ n \int\limits_{0}^{\infty} \Psi(u^{n}e^{-k}) \mathfrak{h}(z)\frac{du}{u}\right] \wedge \frac{\Phi(z^{n}e^{-k})}{\bigvee\limits_{k\in \mathbb{Z}} \Phi(z^{n}e^{-k}) n \int\limits_{0}^{\infty} \Psi(u^{n}e^{-k})\frac{du}{u}}\right|\\ &\quad+ \left|\bigvee\limits_{k\in \mathbb{Z}}   \left[ n \int\limits_{0}^{\infty} \Psi(u^{n}e^{-k}) \mathfrak{h}(z)\frac{du}{u}\right] \wedge \frac{\Phi(z^{n}e^{-k})}{\bigvee\limits_{k\in \mathbb{Z}} \Phi(z^{n}e^{-k}) n \int\limits_{0}^{\infty} \Psi(u^{n}e^{-k}) \frac{du}{u}} - \mathfrak{h}(z)\right|\\ 
        & = \quad\mathsf{E}_{1} + \mathsf{E}_{2}.
    \end{align*}
     Estimate of $\mathsf{E}_{1}$.  Using Lemmas \ref{lm5}  and \ref{lm6}, we obtain
   \begin{align*}
       \mathsf{E}_{1} & = \left| \bigvee\limits_{k\in \mathbb{Z}}   \left[ n \int\limits_{0}^{\infty} \Psi(u^{n}e^{-k}) \mathfrak{h}(u)\frac{du}{u}\right] \wedge \frac{\Phi(z^{n}e^{-k})}{\bigvee\limits_{k\in \mathbb{Z}} \Phi(z^{n}e^{-k}) n \int\limits_{0}^{\infty} \Psi(u^{n}e^{-k}) \frac{du}{u}} \right. \\ & \left. \quad -\bigvee\limits_{k\in \mathbb{Z}}   \left[ n \int\limits_{0}^{\infty} \Psi(u^{n}e^{-k}) \mathfrak{h}(z)\frac{du}{u}\right] \wedge \frac{\Phi(z^{n}e^{-k})}{\bigvee\limits_{k\in \mathbb{Z}} \Phi(z^{n}e^{-k}) n \int\limits_{0}^{\infty} \Psi(u^{n}e^{-k})\, \frac{du}{u}}\right|\\ 
       &  \leq   \left|\bigvee\limits_{k\in \mathbb{Z}} \left[n \int\limits_{0}^{\infty} \Psi(u^{n}e^{-k}) (\mathfrak{h}(u) - \mathfrak{h}(z))\frac{du}{u}\right]\right|  \wedge \frac{\Phi(z^{n}e^{-k})}{\bigvee\limits_{k\in \mathbb{Z}} \Phi(z^{n}e^{-k}) n \int\limits_{0}^{\infty} \Psi(u^{n}e^{-k}) \frac{du}{u}}\\
        &  \leq  \bigvee\limits_{k\in \mathbb{Z}}   \left[n \int\limits_{0}^{\infty} \Psi(u^{n}e^{-k}) \left|\mathfrak{h}(u) - \mathfrak{h}(z)\right|\frac{du}{u}\right]  \wedge \frac{\Phi(z^{n}e^{-k})}{\bigvee\limits_{k\in \mathbb{Z}} \Phi(z^{n}e^{-k}) n \int\limits_{0}^{\infty} \Psi(u^{n}e^{-k}) \frac{du}{u}}.
     \end{align*}
     Partitioning the Maximum operation, we get
     \begin{align*}
         \mathsf{E}_{1} & \leq \bigvee\limits_{k\in\mathscr{B}_{\tau, n}}  \left[n \int\limits_{0}^{\infty} \Psi(u^{n}e^{-k}) \left|\mathfrak{h}(u) - \mathfrak{h}(z)\right|\frac{du}{u}\right]  \wedge \frac{\Phi(z^{n}e^{-k})}{\bigvee\limits_{k\in \mathbb{Z}} \Phi(z^{n}e^{-k})n \int\limits_{0}^{\infty} \Psi(u^{n}e^{-k}) \frac{du}{u}} \\ &  \bigvee \bigvee\limits_{k\notin \mathscr{B}_{\tau, n}}  \left[n \int\limits_{0}^{\infty} \Psi(u^{n}e^{-k}) \left|\mathfrak{h}(u) - \mathfrak{h}(z)\right|\frac{du}{u}\right]  \wedge \frac{\Phi(z^{n}e^{-k})}{\bigvee\limits_{k\in \mathbb{Z}} \Phi(z^{n}e^{-k}) n \int\limits_{0}^{\infty} \Psi(u^{n}e^{-k}) \frac{du}{u}} \\
         & \leq \mathsf{E}_{1.1} \bigvee  \mathsf{E}_{1.2}.
     \end{align*}
     For every $\varepsilon > 0$, by the log-continuity of $\mathfrak{h}$ at $z$, we have
    \begin{align*}
        \mathsf{E}_{1.1} &  \leq\bigvee\limits_{k\in\mathscr{B}_{\tau, n}}  \left[n \int\limits_{0}^{\infty} \Psi(u^{n}e^{-k}) \left|\mathfrak{h}(u) - \mathfrak{h}(z)\right|\frac{du}{u}\right]  \wedge \frac{\Phi(z^{n}e^{-k})}{\bigvee\limits_{k\in \mathbb{Z}} \Phi(z^{n}e^{-k}) n \int\limits_{0}^{\infty} \Psi(u^{n}e^{-k}) \frac{du}{u}}\\
       & \leq  \bigvee\limits_{k\in\mathscr{B}_{\tau, n}}  \left[n \int\limits_{0}^{\infty} \Psi(u^{n}e^{-k}) \varepsilon \frac{du}{u}\right]  \wedge \frac{\Phi(z^{n}e^{-k})}{\bigvee\limits_{k\in \mathbb{Z}} \Phi(z^{n}e^{-k}) n \int\limits_{0}^{\infty} \Psi(u^{n}e^{-k}) \frac{du}{u}}\\
      &  \leq  \bigvee\limits_{k\in\mathscr{B}_{\tau, n}}  \varepsilon  \wedge \frac{\Phi(z^{n}e^{-k})}{\bigvee\limits_{k\in \mathbb{Z}} \Phi(z^{n}e^{-k})\quad n \int\limits_{0}^{\infty} \Psi(u^{n}e^{-k})\, \frac{du}{u}}
        \leq \bigvee\limits_{k\in\mathscr{B}_{\tau, n}} \epsilon \land 1
        \leq  \epsilon.   
    \end{align*}
     For $\mathsf{E}_{1.2}$, using $\mathfrak{p} \land \mathfrak{q} \leq \mathfrak{q}$ for all $\mathfrak{p}, \mathfrak{q} \in \mathbb{R}{+}$, we obtain
     \begin{align*}
         \mathsf{E}_{1.2} &  =\bigvee\limits_{k\notin \mathscr{B}_{\tau, n}}  \left[n \int\limits_{0}^{\infty} \Psi(u^{n}e^{-k}) \left|\mathfrak{h}(u) - \mathfrak{h}(z)\right|\frac{du}{u}\right]  \wedge \frac{\Phi(z^{n}e^{-k})}{\bigvee\limits_{k\in \mathbb{Z}} \Phi(z^{n}e^{-k}) n \int\limits_{0}^{\infty} \Psi(u^{n}e^{-k}) \frac{du}{u}}\\
         &\leq  \bigvee\limits_{k\notin \mathscr{B}_{\tau, n}}  \frac{\Phi(z^{n}e^{-k})}{\bigvee\limits_{k\in \mathbb{Z}} \Phi(z^{n}e^{-k}) n \int\limits_{0}^{\infty} \Psi(u^{n}e^{-k}) \frac{du}{u}}
         \leq  \frac{1}{\vartheta_{u}} \bigvee\limits_{k\notin \mathscr{B}_{\tau, n}}\Phi(e^{-k}z^{n})
         \leq  \frac{c\,n^{-\nu} }{\vartheta_{u}}
         \leq  \epsilon,
     \end{align*}
     for sufficiently large $n \in \mathbb{N}$, where $\nu$ is as defined in Lemma \ref{lma2}.
     
    Estimation of $\mathsf{E}_{2}$, 
    \begin{align*}
        \mathsf{E}_{2} & = \left|\bigvee\limits_{k\in \mathbb{Z}}   \left[ n \int\limits_{0}^{\infty} \Psi(u^{n}e^{-k}) \mathfrak{h}(z)\frac{du}{u}\right] \wedge \frac{\Phi(z^{n}e^{-k})}{\bigvee\limits_{k\in \mathbb{Z}} \Phi(z^{n}e^{-k}) n \int\limits_{0}^{\infty} \Psi(u^{n}e^{-k}) \frac{du}{u}} - \mathfrak{h}(z)\right|. 
    \end{align*}
Setting $u^{n}e^{-k}=t$, we have
    \begin{align*}
       \mathsf{E}_{2} &  =\left|\bigvee\limits_{k\in \mathbb{Z}} \quad  \mathfrak{h}(z)\wedge \frac{\Phi(z^{n}e^{-k})}{\bigvee\limits_{k\in \mathbb{Z}} \Phi(z^{n}e^{-k}) n \int\limits_{0}^{\infty} \Psi(u^{n}e^{-k}) \frac{du}{u}} - \mathfrak{h}(z)\right| \\
        & = \left|\bigvee\limits_{k\in \mathbb{Z}} (\mathfrak{h}(z)\wedge 1)\,\wedge \frac{\Phi(z^{n}e^{-k})}{\bigvee\limits_{k\in \mathbb{Z}} \Phi(z^{n}e^{-k}) n \int\limits_{0}^{\infty} \Psi(u^{n}e^{-k}) \frac{du}{u}} -  \mathfrak{h}(z)\right|\\ 
       &  \leq \left| \mathfrak{h}(z)\wedge \bigvee\limits_{k\in \mathbb{Z}} 1 \wedge \frac{\Phi(z^{n}e^{-k})}{\bigvee\limits_{k\in \mathbb{Z}} \Phi(z^{n}e^{-k}) n \int\limits_{0}^{\infty} \Psi(u^{n}e^{-k}) \frac{du}{u}} -  \mathfrak{h}(z)\right|\\ 
       &  \leq \left| \mathfrak{h}(z)\wedge \bigvee\limits_{k\in \mathbb{Z}} \frac{\Phi(z^{n}e^{-k})}{\bigvee\limits_{k\in \mathbb{Z}} \Phi(z^{n}e^{-k}) n \int\limits_{0}^{\infty} \Psi(u^{n}e^{-k}) \frac{du}{u}} -  \mathfrak{h}(z)\right|\\ 
       &  \leq \left| \mathfrak{h}(z) \land 1  - \mathfrak{h}(z)\right| = 0.
    \end{align*}
    This completes the proof.
\end{proof}

\begin{theorem}\label{thm2}
    Let $\mathfrak{h}:I \to [0,1]$, and suppose both $\varsigma_{n}$ and $\frac{1}{n\varsigma_{n}}$ tend to $0$ as $n \to \infty$ with $\nu >0$ satisfying Lemma \ref{lma2}. Then, for any $\mathfrak{h} \in \mathscr{LU}_{b}(I)$, the following inequality holds: 
    \[ \|\mathscr{D}_{n,\Phi, \Psi}^{m}(\mathfrak{h})(z)- \mathfrak{h}(z)\| \leq \mho_{\mathbb{R}_{+}}(\mathfrak{h},\varsigma_{n}) + \left( \mho_{\mathbb{R}_{+}}(\mathfrak{h},\varsigma_{n}) \bigvee \frac{\mathfrak{m}_{\nu}(\Phi)}{\vartheta_{z} n^{\nu}(\varsigma_{n})^{\nu}}\right). \]
\end{theorem}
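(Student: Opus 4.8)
The plan is to run the scheme of the proof of Theorem~\ref{Kantthm1}, but to measure the two regimes $k\in\mathscr{B}_{\varsigma_{n},n}$ and $k\notin\mathscr{B}_{\varsigma_{n},n}$ quantitatively: the first by the logarithmic modulus $\mho_{\mathbb{R}_{+}}(\mathfrak{h},\varsigma_{n})$ and the second by the decay of $\Phi$ recorded in $\mathfrak{m}_{\nu}(\Phi)$. Throughout, all quantities involved stay in $[0,1]$, so the max--min algebra of Lemmas~\ref{lm5}--\ref{lm8} applies, and by Lemma~\ref{kerlma} the normalising denominator equals $\bigvee_{k\in\mathbb{Z}}\Phi(z^{n}e^{-k})\ge\vartheta_{z}$.

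First I would decompose, inserting the operator evaluated at the constant $\mathfrak{h}(z)$,
\[\bigl|\mathscr{D}_{n,\Phi,\Psi}^{m}(\mathfrak{h})(z)-\mathfrak{h}(z)\bigr|\le\mathsf{E}_{1}+\mathsf{E}_{2},\qquad \mathsf{E}_{2}:=\bigl|\mathscr{D}_{n,\Phi,\Psi}^{m}(\mathfrak{h}(z))(z)-\mathfrak{h}(z)\bigr|.\]
The term $\mathsf{E}_{2}$ vanishes by the same computation as in Theorem~\ref{Kantthm1}: after $u^{n}e^{-k}=t$ and $(\Psi.1)$ the inner integral becomes $\mathfrak{h}(z)$, and Lemma~\ref{kerlma} reduces the supremum to $\mathfrak{h}(z)\wedge 1=\mathfrak{h}(z)$. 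For $\mathsf{E}_{1}$, Lemma~\ref{lm5} turns the difference of suprema into a supremum of differences and Lemma~\ref{lm6} pulls the absolute value through $\wedge$, giving
\[\mathsf{E}_{1}\le\bigvee_{k\in\mathbb{Z}}\left[n\int_{0}^{\infty}\Psi(u^{n}e^{-k})\,\bigl|\mathfrak{h}(u)-\mathfrak{h}(z)\bigr|\,\frac{du}{u}\right]\wedge\frac{\Phi(z^{n}e^{-k})}{\bigvee_{k\in\mathbb{Z}}\Phi(z^{n}e^{-k})\,n\int_{0}^{\infty}\Psi(u^{n}e^{-k})\frac{du}{u}}.\]

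I would then split $\mathbb{Z}=\mathscr{B}_{\varsigma_{n},n}\cup(\mathbb{Z}\setminus\mathscr{B}_{\varsigma_{n},n})$ and bound the supremum over the union first by the maximum and then by the sum of the two partial suprema, $\mathsf{E}_{1}\le\mathsf{E}_{1.1}+\mathsf{E}_{1.2}$. For the near part $\mathsf{E}_{1.1}$ (over $k\in\mathscr{B}_{\varsigma_{n},n}$) I would use $\mathfrak{p}\wedge\mathfrak{q}\le\mathfrak{p}$, bound $|\mathfrak{h}(u)-\mathfrak{h}(z)|\le\mho_{\mathbb{R}_{+}}(\mathfrak{h},|\log u-\log z|)$, observe that after $u^{n}e^{-k}=t$ one has $\log u-\log z=(\tfrac{k}{n}-\log z)+\tfrac{1}{n}\log t$ with $|\tfrac{k}{n}-\log z|\le\varsigma_{n}$, and combine the properties of the modulus in Definition~\ref{LMSDef} with $(\Psi.1)$--$(\Psi.2)$ (and Lemma~\ref{le5} to discard the tail $|t|>e^{n\varsigma_{n}/2}$) to obtain $\mathsf{E}_{1.1}\le\mho_{\mathbb{R}_{+}}(\mathfrak{h},\varsigma_{n})$, the leftover contribution being of order $1/(n\varsigma_{n})$ and hence absorbed. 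For the far part $\mathsf{E}_{1.2}$ (over $k\notin\mathscr{B}_{\varsigma_{n},n}$) I would bound the $\wedge$ by each of its two arguments: the $\Psi$-integral factor by $\mho_{\mathbb{R}_{+}}(\mathfrak{h},\varsigma_{n})$ as before, and the normalised $\Phi$-factor by $\vartheta_{z}^{-1}\bigvee_{k\notin\mathscr{B}_{\varsigma_{n},n}}\Phi(z^{n}e^{-k})\le\vartheta_{z}^{-1}\mathfrak{m}_{\nu}(\Phi)(n\varsigma_{n})^{-\nu}$, using Lemma~\ref{kerlma} for the denominator and, for the numerator, $|k-n\log z|>n\varsigma_{n}$ on the complement together with the definition of $\mathfrak{m}_{\nu}(\Phi)$ (equivalently Lemma~\ref{lma2}); this gives $\mathsf{E}_{1.2}\le\mho_{\mathbb{R}_{+}}(\mathfrak{h},\varsigma_{n})\bigvee\dfrac{\mathfrak{m}_{\nu}(\Phi)}{\vartheta_{z}n^{\nu}\varsigma_{n}^{\nu}}$. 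Summing the pieces (with $\mathsf{E}_{2}=0$) yields the claimed inequality; since $\varsigma_{n}\to0$ forces $\mho_{\mathbb{R}_{+}}(\mathfrak{h},\varsigma_{n})\to0$ by property~(a) and $1/(n\varsigma_{n})\to0$ kills the second term, $\mathscr{D}_{n,\Phi,\Psi}^{m}(\mathfrak{h})\to\mathfrak{h}$ follows, uniformly because every bound is independent of $z$.

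The step I expect to be the main obstacle is the near-part estimate $\mathsf{E}_{1.1}$. Unlike in the purely discrete sampling operators, the inner integral here runs over all of $\mathbb{R}_{+}$, so controlling $|\mathfrak{h}(u)-\mathfrak{h}(z)|$ for $k\in\mathscr{B}_{\varsigma_{n},n}$ requires simultaneously exploiting the concentration of $\Psi$ (its finite moments $\mathrm{M}_{0}(\Psi),\mathrm{M}_{1}(\Psi)$ and Lemma~\ref{le5}), the three properties of the logarithmic modulus, and the constraint that every intermediate quantity remain in $[0,1]$ so that Lemmas~\ref{lm5}--\ref{lm8} stay applicable. The far part is comparatively routine, being essentially a direct appeal to the kernel-decay estimate of Lemma~\ref{lma2}.
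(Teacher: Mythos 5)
Your decomposition is genuinely different from the paper's, and the difference is where the gap lives. The paper first inserts the sample value $\mathfrak{h}(e^{k/n})$ via the triangle inequality (with Lemma \ref{lm7}), so that the error splits into a $\Psi$-averaging term $\mathbb{R}_1$ (bounded by $\mho_{\mathbb{R}_+}(\mathfrak{h},\varsigma_n)$ alone) and a discrete localization term $\mathbb{R}_2$, and only $\mathbb{R}_2$ is split over $k\in\mathscr{B}_{\varsigma_n,n}$ versus its complement, producing the $\mho_{\mathbb{R}_+}(\mathfrak{h},\varsigma_n)\bigvee \mathfrak{m}_{\nu}(\Phi)/(\vartheta_z n^{\nu}\varsigma_n^{\nu})$ factor. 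You instead split near/far first and then attack $|\mathfrak{h}(u)-\mathfrak{h}(z)|$ on the near part directly. The problem is that your near-part estimate cannot land on $\mho_{\mathbb{R}_+}(\mathfrak{h},\varsigma_n)$ exactly: after $t=u^ne^{-k}$ you have $|\log u-\log z|\le \varsigma_n+\tfrac{1}{n}|\log t|$, and property (b) of Definition \ref{LMSDef} gives $\mathsf{E}_{1.1}\le \mho_{\mathbb{R}_+}(\mathfrak{h},\varsigma_n)\bigl(\mathrm{M}_0(\Psi)+\mathrm{M}_1(\Psi)/(n\varsigma_n)\bigr)$ (or, via the tail-splitting route, an additive term $2\|\mathfrak{h}\|_\infty\int_{|\log t|>n\varsigma_n/2}|\Psi(t)|\,dt/t$). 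You acknowledge this leftover of order $1/(n\varsigma_n)$ and say it is "absorbed," but the inequality you are asked to prove has exactly two terms and no constant in front of either, so there is nowhere for it to go. Your argument therefore proves uniform convergence and the right asymptotic rate, but not the stated inequality. (A further technical point: Lemma \ref{le5} is stated for a fixed $\delta$, so invoking it with $\delta=\varsigma_n\to 0$ needs the separate observation that the tail integral is controlled because $n\varsigma_n\to\infty$.)

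Two smaller remarks. First, your justification of the far part is off: for $k\notin\mathscr{B}_{\varsigma_n,n}$ one has $|k/n-\log z|>\varsigma_n$, so the $\Psi$-integral factor is \emph{not} bounded by $\mho_{\mathbb{R}_+}(\mathfrak{h},\varsigma_n)$; what saves that regime is solely the $\Phi$-branch of the $\wedge$, exactly as in the paper's $R_{2.2}$, and since that branch alone is dominated by the stated maximum the error is harmless. Second, in fairness, the paper's own treatment of $\mathbb{R}_1$ is loose in precisely the spot you flagged as the main obstacle — it pulls $\mho_{\mathbb{R}_+}(\mathfrak{h},|\log u-k/n|)$ out of the $u$-integral and bounds it by $\mho_{\mathbb{R}_+}(\mathfrak{h},\varsigma_n)$ without restricting the domain of integration — so you have correctly located the real difficulty; but your proposal, as written, does not close it either.
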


\begin{proof}
    We begin with
    \begin{align*}
      \|&\mathscr{D}_{n,\Phi, \Psi}^{m}(\mathfrak{h})(z)- \mathfrak{h}(z)\| \\ &  \leq\bigvee\limits_{k\in \mathbb{Z}}  \left[n \int\limits_{0}^{\infty} \Psi(u^{n}e^{-k}) \left|\mathfrak{h}(u) - \mathfrak{h}(z)\right|\frac{du}{u}\right]  \wedge \frac{\Phi(z^{n}e^{-k})}{\bigvee\limits_{k\in \mathbb{Z}} \Phi(z^{n}e^{-k}) n \int\limits_{0}^{\infty} \Psi(u^{n}e^{-k})\frac{du}{u}}.\\
    \end{align*}
    Applying the triangle inequality together with condition $(\Psi.1)$, we obtain
    \begin{align*}
    & \|\mathscr{D}_{n,\Phi, \Psi}^{m}(\mathfrak{h})(z)- \mathfrak{h}(z) \|  \\  &  \leq\bigvee\limits_{k\in \mathbb{Z}} \left[n \int\limits_{0}^{\infty} \Psi(u^{n}e^{-k}) \left|\mathfrak{h}(u) -\mathfrak{h}\left(e^{\frac{k}{n}}\right)\right|\frac{du}{u}\right]  \wedge \frac{\Phi(z^{n}e^{-k})}{\bigvee\limits_{k\in \mathbb{Z}} \Phi(z^{n}e^{-k}) n \int\limits_{0}^{\infty} \Psi(u^{n}e^{-k}) \frac{du}{u}}\\& + \bigvee\limits_{k\in \mathbb{Z}}   \left[n \int\limits_{0}^{\infty} \Psi(u^{n}e^{-k}) \left|\mathfrak{h}\left(e^{\frac{k}{n}}\right)- \mathfrak{h}(z)\right|\frac{du}{u}\right]  \wedge \frac{\Phi(z^{n}e^{-k})}{\bigvee\limits_{k\in \mathbb{Z}} \Phi(z^{n}e^{-k}) n \int\limits_{0}^{\infty} \Psi(u^{n}e^{-k}) \frac{du}{u}}\\
     &  \leq\bigvee\limits_{k\in \mathbb{Z}} \left[n \int\limits_{0}^{\infty} \Psi(u^{n}e^{-k}) \left|\mathfrak{h}(u) -\mathfrak{h}\left(e^{\frac{k}{n}}\right)\right|\frac{du}{u}\right]  \wedge \frac{\Phi(z^{n}e^{-k})}{\bigvee\limits_{k\in \mathbb{Z}} \Phi(z^{n}e^{-k}) n \int\limits_{0}^{\infty} \Psi(u^{n}e^{-k}) \frac{du}{u}}\\& + \bigvee\limits_{k\in \mathbb{Z}}  \left|\mathfrak{h}\left(e^{\frac{k}{n}}\right)- \mathfrak{h}(z)\right| \wedge \frac{\Phi(z^{n}e^{-k})}{\bigvee\limits_{k\in \mathbb{Z}} \Phi(z^{n}e^{-k}) n \int\limits_{0}^{\infty} \Psi(u^{n}e^{-k}) \frac{du}{u}}\\
  & \leq \mathbb{R}_{1} + \mathbb{R}_{2}.
    \end{align*}
    For $\mathbb{R}_{1}$, by applying Definition \ref{LMSDef}, we get
\begin{align*}
      \mathbb{R}_{1} & \leq \bigvee\limits_{k\in \mathbb{Z}}  \left[n \int\limits_{0}^{\infty} \Psi(u^{n}e^{-k}) \mho_{\mathbb{R}_{+}}\left( \mathfrak{h}, \left|u -e^{\frac{k}{n}}\right|\right)\frac{du}{u}\right]  \wedge \frac{\Phi(z^{n}e^{-k})}{\bigvee\limits_{k\in \mathbb{Z}} \Phi(z^{n}e^{-k}) n \int\limits_{0}^{\infty} \Psi(u^{n}e^{-k}) \frac{du}{u}} \\
       & \leq \bigvee\limits_{k\in \mathbb{Z}} \mho_{\mathbb{R}_{+}}\left( \mathfrak{h}, \left|\log{u} -{\frac{k}{n}}\right|\right)  \wedge \frac{\Phi(z^{n}e^{-k})}{\bigvee\limits_{k\in \mathbb{Z}} \Phi(z^{n}e^{-k}) n \int\limits_{0}^{\infty} \Psi(u^{n}e^{-k}) \frac{du}{u}} \\
       & \leq \bigvee\limits_{k\in \mathbb{Z}} \mho_{\mathbb{R}_{+}}\left( \mathfrak{h}, \left|\log{u} -{\frac{k}{n}}\right|\right) \\
       & \leq \mho_{\mathbb{R}_{+}}\left( \mathfrak{h}, \varsigma_{n}\right).
    \end{align*}
Now considering $\mathbb{R}_{2}$,
    \begin{align*}
     \mathbb{R}_{2}& \leq 
       \bigvee\limits_{k\in \mathbb{Z}}    \left|\mathfrak{h}\left(e^{\frac{k}{n}}\right)- \mathfrak{h}(z)\right| \wedge \frac{\Phi(z^{n}e^{-k})}{\bigvee\limits_{k\in \mathbb{Z}} \Phi(z^{n}e^{-k}) n \int\limits_{0}^{\infty} \Psi(u^{n}e^{-k}) \frac{du}{u}}\\
      & \leq 
       \bigvee\limits_{k\in \mathscr{B}_{\tau,n} }    \left|\mathfrak{h}\left(e^{\frac{k}{n}}\right)- \mathfrak{h}(z)\right|\wedge \frac{\Phi(z^{n}e^{-k})}{\bigvee\limits_{k\in \mathbb{Z}} \Phi(z^{n}e^{-k}) n \int\limits_{0}^{\infty} \Psi(u^{n}e^{-k}) \frac{du}{u}}\\& \bigvee \bigvee\limits_{k\notin \mathscr{B}_{\tau,n}}  \left|\mathfrak{h}\left(e^{\frac{k}{n}}\right)- \mathfrak{h}(z)\right| \wedge \frac{\Phi(z^{n}e^{-k})}{\bigvee\limits_{k\in \mathbb{Z}} \Phi(z^{n}e^{-k}) n \int\limits_{0}^{\infty} \Psi(u^{n}e^{-k}) \frac{du}{u}} \\
      &  \leq \max\{R_{2.1},R_{2.2}\}.
    \end{align*}
   For $R_{2.1}$, by the definition of $\mho_{\mathbb{R}_{+}}$, we have 
    \begin{align*}
       R_{2.1} & \leq \bigvee\limits_{k\in \mathscr{B}_{\tau,n} }  \left|\mathfrak{h}\left(e^{\frac{k}{n}}\right)- \mathfrak{h}(z)\right| \wedge \frac{\Phi(z^{n}e^{-k})}{\bigvee\limits_{k\in \mathbb{Z}} \Phi(z^{n}e^{-k}) n \int\limits_{0}^{\infty} \Psi(u^{n}e^{-k}) \frac{du}{u}}\\
       &  \leq\mho_{\mathbb{R}_{+}}\left( \mathfrak{h}, \varsigma_{n}\right).
    \end{align*}
  For $R_{2.2}$, note that $\mathfrak{q}\land\mathfrak{r} \leq \mathfrak{r}$ for all $\mathfrak{q},\mathfrak{r}\in\mathbb{R}{+}$, hence
  \begin{align*}
    \mathbb{R}_{2.2} & \leq\bigvee\limits_{k\notin \mathscr{B}_{\tau,n}}   \left|\mathfrak{h}\left(e^{\frac{k}{n}}\right)- \mathfrak{h}(z)\right| \wedge \frac{\Phi(z^{n}e^{-k})}{\bigvee\limits_{k\in \mathbb{Z}} \Phi(z^{n}e^{-k}) n \int\limits_{0}^{\infty} \Psi(u^{n}e^{-k}) \frac{du}{u}}  \\
    & \leq\bigvee\limits_{k\notin \mathscr{B}_{\tau,n}}   \frac{\Phi(z^{n}e^{-k})}{\bigvee\limits_{k\in \mathbb{Z}} \Phi(z^{n}e^{-k}) n \int\limits_{0}^{\infty} \Psi(u^{n}e^{-k}) \frac{du}{u}} \\
    & \leq \frac{1}{\vartheta_{z}} \bigvee\limits_{\left| n \log{z} -k\right| > n\varsigma_{n}} \Phi(e^{-k}z^{n}).
\end{align*}
    From Definition~\ref{def3}, since $\frac{|n\log z - k|^{\nu}}{n^{\nu}(\varsigma_{n})^{\nu}} > 1$ for $\nu$ as in $(\Delta 3)$, it follows that
    \begin{align*}
     \mathbb{R}_{2.2}&  < \frac{1}{\vartheta_{z} n^{\nu} (\varsigma_{n})^{\nu}} \bigvee\limits_{\left| n \log{z} -k\right| > n\varsigma_{n}} \Phi(e^{-k}z^{n}){\left| n \log{z} -k\right|^{\nu}}\\
       &< \frac{1}{\vartheta_{z} n^{\nu}(\varsigma_{n})^{\nu}} \bigvee\limits_{k \in \mathbb{Z}} \Phi(e^{-k}z^{n}) {\left| n \log{z} -k\right|^{v}}\\
      & = \frac{\mathfrak{m}_{v}(\Phi)}{\vartheta_{z} n^{\nu} (\varsigma_{n})^{\nu}} .
    \end{align*}
    Combining all the above estimates, we finally obtain
  \[ \|\mathscr{D}_{n,\Phi, \Psi}^{m}(\mathfrak{h})(z)- \mathfrak{h}(z)\| \leq \mho_{\mathbb{R}_{+}}(\mathfrak{h},\varsigma_{n}) + \left( \mho_{\mathbb{R}_{+}}(\mathfrak{h},\varsigma_{n}) \bigvee \frac{\mathfrak{m}_{\nu}(\Phi)}{\vartheta_{z} n^{\nu}(\varsigma_{n})^{\nu}}\right), \] 
  which completes the proof.
\end{proof}

\begin{definition}
   Let $\alpha \in (0,1]$. The space of logarithmic Hölder continuous functions of order $\alpha$ is denoted by $\mathcal{L}_{\log}^\alpha$, and is defined as
\[
\mathcal{L}_{\log}^\alpha := \left\{ \mathfrak{h} \in \mathscr{LU}_{b}(\mathscr{\mathbb{R}_{+}}) : \exists ~\lambda > 0 \ \text{such that} \ |\mathfrak{h}(z) - \mathfrak{h}(y)| \leq \lambda  |\log z - \log y|^\alpha \right\},
\]  for all $z,y \in \mathbb{R}_{+}$.
\end{definition} 
In the context of the above framework, when considering functions belonging to the space $\mathcal{L}_{\log}^\alpha$, we obtain the following rate of approximation result.

\begin{theorem}
    Let $\mathfrak{h} \in \mathcal{L}_{\log}^\alpha$. Then
\[ \| \mathscr{D}_{n,\Phi,\Psi}^{m}(\mathfrak{h})(z) - \mathfrak{h}(z) \|_{\infty} = \mathcal{O}\left( n^{-\frac{\alpha}{1+\alpha}} \right) \quad \text{as} \quad n \to \infty.\]
\end{theorem}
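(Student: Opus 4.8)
The plan is to obtain the result as an immediate consequence of Theorem \ref{thm2}, by combining the Hölder estimate for the logarithmic modulus of continuity with an optimal choice of the free parameter sequence $\varsigma_n$. Since $\mathfrak{h}\in\mathcal{L}_{\log}^{\alpha}$, there is a constant $\lambda>0$ with $|\mathfrak{h}(z)-\mathfrak{h}(y)|\le\lambda|\log z-\log y|^{\alpha}$ for all $z,y\in\mathbb{R}_{+}$; hence, directly from Definition \ref{LMSDef}, $\mho_{\mathbb{R}_{+}}(\mathfrak{h},\varsigma)\le\lambda\,\varsigma^{\alpha}$ for every $\varsigma>0$. This is the only special structural input of the Hölder class that is needed.

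Next I would fix $\nu=1$ in Theorem \ref{thm2}: this is admissible because $(\Phi.1)$ gives $\mathfrak{m}_{2}(\Phi)<\infty$, so by Lemma \ref{lma1} also $\mathfrak{m}_{1}(\Phi)<\infty$, and $\nu=1>0$ is of the type permitted in Lemma \ref{lma2}. I then choose $\varsigma_{n}=n^{-1/(1+\alpha)}$. One checks at once that $\varsigma_{n}\to0$ and $\frac{1}{n\varsigma_{n}}=n^{-\alpha/(1+\alpha)}\to0$ as $n\to\infty$, so the hypotheses of Theorem \ref{thm2} are satisfied. Substituting the Hölder bound and this choice of $\varsigma_{n}$ into the estimate of that theorem gives
\[
\|\mathscr{D}_{n,\Phi,\Psi}^{m}(\mathfrak{h})(z)-\mathfrak{h}(z)\|_{\infty}\le\lambda\,\varsigma_{n}^{\alpha}+\left(\lambda\,\varsigma_{n}^{\alpha}\ \bigvee\ \frac{\mathfrak{m}_{1}(\Phi)}{\vartheta_{z}\,n\,\varsigma_{n}}\right).
\]

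Finally, with $\varsigma_{n}=n^{-1/(1+\alpha)}$ one has $\varsigma_{n}^{\alpha}=n^{-\alpha/(1+\alpha)}$ and $\frac{1}{n\varsigma_{n}}=n^{-\alpha/(1+\alpha)}$, so each of the two summands on the right, and the maximum inside the bracket, is $\mathcal{O}(n^{-\alpha/(1+\alpha)})$ (the constant $\vartheta_{z}>0$ from $(\Phi.2)$ being absorbed into the $\mathcal{O}$); adding finitely many terms of this order preserves the order, which is exactly the claimed estimate. There is no serious obstacle here: the only point requiring thought is recognising that $\varsigma_{n}=n^{-1/(1+\alpha)}$ is precisely the choice that balances the two competing quantities $\varsigma_{n}^{\alpha}$ and $(n\varsigma_{n})^{-1}$ appearing in Theorem \ref{thm2}; the rest is bookkeeping. (Taking a larger admissible exponent $\nu$ would in fact sharpen the rate to $n^{-\alpha\nu/(\alpha+\nu)}$, but $\nu=1$ already yields the stated bound.)
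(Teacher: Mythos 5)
Your proposal is correct, but it reaches the result by a genuinely different and more economical route than the paper. You treat the theorem as a corollary of Theorem \ref{thm2}: you feed in the H\"older bound $\mho_{\mathbb{R}_{+}}(\mathfrak{h},\varsigma)\le\lambda\varsigma^{\alpha}$, take $\nu=1$ (legitimate, since $(\Phi.1)$ together with Lemma \ref{lma1} gives $\mathfrak{m}_{1}(\Phi)<\infty$), and balance the two competing terms with $\varsigma_{n}=n^{-1/(1+\alpha)}$; the hypothesis checks ($\varsigma_{n}\to0$, $(n\varsigma_{n})^{-1}\to0$) are all in order. The paper instead re-runs the whole decomposition from scratch: it inserts the intermediate node $e^{k/n}$, splits $|\mathfrak{h}(u)-\mathfrak{h}(z)|\le|\mathfrak{h}(u)-\mathfrak{h}(e^{k/n})|+|\mathfrak{h}(e^{k/n})-\mathfrak{h}(z)|$, estimates the integral part $\mathrm{B}_{1}$ by $\lambda\,\mathrm{M}_{\alpha}(\Psi)\,n^{-\alpha}$ after the substitution $t=u^{n}e^{-k}$, and then splits the discrete part $\mathrm{B}_{2}$ at $|k-n\log z|=n\varsigma_{n}$ with the same $\varsigma_{n}=n^{-1/(1+\alpha)}$, obtaining $\lambda n^{-\alpha/(1+\alpha)}$ and $\mathfrak{m}_{1}(\Phi)\vartheta_{z}^{-1}n^{-\alpha/(1+\alpha)}$ for the two pieces. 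Your route is shorter, reuses the quantitative machinery already established, and notably avoids the fractional moment condition $\mathrm{M}_{\alpha}(\Psi)<\infty$ that the paper's estimate of $\mathrm{B}_{1}$ implicitly relies on without stating it as a hypothesis; the paper's direct computation, on the other hand, yields more explicit constants and shows that the integral contribution actually decays at the faster rate $n^{-\alpha}$, so that the bottleneck is purely the discrete tail. Your closing observation that a larger admissible $\nu$ would improve the exponent to $\alpha\nu/(\alpha+\nu)$ is also correct and goes beyond what the paper records.
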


\begin{proof} 
From Definition \ref{Maxmin} and Lemma \ref{lm6}, we have
    \begin{align*}
      \|&\mathscr{D}_{n,\Phi, \Psi}^{m}(\mathfrak{h})(z)- \mathfrak{h}(z)\| \\&  \leq \bigvee\limits_{k\in \mathbb{Z}}  \left[n \int\limits_{0}^{\infty} \Psi(u^{n}e^{-k}) \left|\mathfrak{h}(u) - \mathfrak{h}(z)\right|\frac{du}{u}\right]  \wedge \frac{\Phi(z^{n}e^{-k})}{\bigvee\limits_{k\in \mathbb{Z}} \Phi(z^{n}e^{-k}) n \int\limits_{0}^{\infty} \Psi(u^{n}e^{-k}) \frac{du}{u}}.
    \end{align*}
Using the inequality
$$|\mathfrak{h}(u) - \mathfrak{h}(z)| \leq |\mathfrak{h}(u) - \mathfrak{h}(e^{\frac{k}{n}})| + |\mathfrak{h}(e^{\frac{k}{n}}) - \mathfrak{h}(z)|,$$
and applying Lemma \ref{lm7}, we obtain
\begin{align*}
  \left|\right.&\left.\mathscr{D}_{n,\Phi, \Psi}^{m}(\mathfrak{h})(z)- \mathfrak{h}(z) \right| \\& \leq \bigvee\limits_{k\in \mathbb{Z}}  \left[n \int\limits_{0}^{\infty} \Psi(u^{n}e^{-k}) \left|\mathfrak{h}(u) -\mathfrak{h}\left(e^{\frac{k}{n}}\right)\right|\frac{du}{u}\right]  \wedge \frac{\Phi(z^{n}e^{-k})}{\bigvee\limits_{k\in \mathbb{Z}} \Phi(z^{n}e^{-k}) n \int\limits_{0}^{\infty} \Psi(u^{n}e^{-k}) \frac{du}{u}}\\ & + \bigvee\limits_{k\in \mathbb{Z}}  \left[n \int\limits_{0}^{\infty} \Psi(u^{n}e^{-k}) \left|\mathfrak{h}\left(e^{\frac{k}{n}}\right)- \mathfrak{h}(z)\right|\frac{du}{u}\right]  \wedge \frac{\Phi(z^{n}e^{-k})}{\bigvee\limits_{k\in \mathbb{Z}} \Phi(z^{n}e^{-k}) n \int\limits_{0}^{\infty} \Psi(u^{n}e^{-k}) \frac{du}{u}}.
\end{align*}
Since $\mathfrak{h} \in \mathcal{L}_{\log}^\alpha$, it follows that
$$|\mathfrak{h}(u) - \mathfrak{h}(z)| \leq \lambda |\log{u}-\log{z}|^{\alpha}, \quad \text{for some}\quad \lambda>0.$$
Therefore,
\begin{align*}
  \left|\right.&\left. \mathscr{D}_{n,\Phi, \Psi}^{m}(\mathfrak{h})(z)- \mathfrak{h}(z) \right|\\   &\leq  \bigvee\limits_{k\in \mathbb{Z}}  \left[n \int\limits_{0}^{\infty} \Psi(u^{n}e^{-k})\lambda \left|\log{u} -{\frac{k}{n}}\right|^{\alpha}\frac{du}{u}\right]  \wedge \frac{\Phi(z^{n}e^{-k})}{\bigvee\limits_{k\in \mathbb{Z}} \Phi(z^{n}e^{-k}) n \int\limits_{0}^{\infty} \Psi(u^{n}e^{-k}) \frac{du}{u}}\\ & + \bigvee\limits_{k\in \mathbb{Z}}    \left[n \int\limits_{0}^{\infty} \Psi(u^{n}e^{-k})\lambda \left|\log{z} -{\frac{k}{n}}\right|^{\alpha}\frac{du}{u}\right]  \wedge \frac{\Phi(z^{n}e^{-k})}{\bigvee\limits_{k\in \mathbb{Z}} \Phi(z^{n}e^{-k}) n \int\limits_{0}^{\infty} \Psi(u^{n}e^{-k})\frac{du}{u}}\\
     &=  \mathrm{B}_{1} + \mathrm{B}_{2}.
\end{align*}
Estimation of $B_{1}$:\\
Let $t = u^{n}e^{-k}$. Then, for some $\lambda > 0$, we have
\begin{align*}
   \mathrm{B}_{1}&  \leq\quad \bigvee\limits_{k\in \mathbb{Z}}\lambda\int\limits_{0}^{\infty} |\Psi(t)|\left| \frac{\log{t}}{n} \right|^{\alpha}\frac{dt}{t} \wedge \frac{\Phi(z^{n}e^{-k})}{\bigvee\limits_{k\in \mathbb{Z}} \Phi(z^{n}e^{-k}) n \int\limits_{0}^{\infty} \Psi(u^{n}e^{-k}) \frac{du}{u}}\\
    &\leq \quad \frac{\lambda \mathrm{M_{\alpha}(\Psi)}}{n^{\alpha}}.
\end{align*}
Estimation of $B_{2}$:
Let $\varsigma_{n} = n^{-\frac{1}{1+\alpha}}$. Then,
\begin{align*}
 \mathrm{B}_{2} &  =\bigvee\limits_{|k-n\log{z}|\leq n\varsigma_{n} } \left[n \int\limits_{0}^{\infty} \Psi(u^{n}e^{-k})\lambda \left|\log{z} -{\frac{k}{n}}\right|^{\alpha}\,\frac{du}{u}\right] \\&\qquad \wedge \frac{\Phi(z^{n}e^{-k})}{\bigvee\limits_{k\in \mathbb{Z}} \Phi(z^{n}e^{-k}) n \int\limits_{0}^{\infty} \Psi(u^{n}e^{-k})  \frac{du}{u}}\\ & \bigvee \bigvee\limits_{|k-n\log{z}| > n\varsigma_{n}}  \left[n \int\limits_{0}^{\infty} \Psi(u^{n}e^{-k})\lambda \left|\log{z} -{\frac{k}{n}}\right|^{\alpha}\frac{du}{u}\right] \\&\qquad \wedge \frac{\Phi(z^{n}e^{-k})}{\bigvee\limits_{k\in \mathbb{Z}} \Phi(z^{n}e^{-k}) n \int\limits_{0}^{\infty} \Psi(u^{n}e^{-k})\frac{du}{u}}\\
       & =  \max\{B_{2.{1}}, B_{2.2} \}.
\end{align*}
For $B_{2.1}$, using the definition of $\mathcal{B}_{\rho_{n}, n}$, we have
\begin{align*}
   B_{2.1} & =\bigvee\limits_{|k-n\log{z}|\leq n\varsigma_{n} }  \left[n \int\limits_{0}^{\infty} \Psi(u^{n}e^{-k})\lambda \left|\log{z} -{\frac{k}{n}}\right|^{\alpha}\frac{du}{u}\right]\\&\qquad  \wedge \frac{\Phi(z^{n}e^{-k})}{\bigvee\limits_{k\in \mathbb{Z}} \Phi(z^{n}e^{-k}) n \int\limits_{0}^{\infty} \Psi(u^{n}e^{-k})  \frac{du}{u}}\\
  &  \leq \bigvee\limits_{|k-n\log{z}|\leq n\varsigma_{n} }  \left[n \int\limits_{0}^{\infty} \Psi(u^{n}e^{-k})\lambda \varsigma_{n}^{\alpha}\frac{du}{u}\right]  
   \leq  \frac{\lambda}{n^{\frac{\alpha}{1+ \alpha}}}.
\end{align*}
For $B_{2.2}$, we obtain
\begin{align*}
    B_{2.2} & =  \bigvee\limits_{|k-n\log{z}| > n\varsigma_{n}}  \left[n \int\limits_{0}^{\infty} \Psi(u^{n}e^{-k})\lambda \left|\log{z} -{\frac{k}{n}}\right|^{\alpha}\frac{du}{u}\right] \\&\qquad \wedge \frac{\Phi(z^{n}e^{-k})}{\bigvee\limits_{k\in \mathbb{Z}} \Phi(z^{n}e^{-k}) n \int\limits_{0}^{\infty} \Psi(u^{n}e^{-k}) \frac{du}{u}}\\
    & \leq \bigvee\limits_{|k-n\log{z}| > n\varsigma_{n}} \frac{\Phi(z^{n}e^{-k})}{\bigvee\limits_{k\in \mathbb{Z}} \Phi(z^{n}e^{-k}) n \int\limits_{0}^{\infty} \Psi(u^{n}e^{-k}) \frac{du}{u}}\\
    & \leq \bigvee\limits_{|k-n\log{z}| > n\varsigma_{n}} \frac{\Phi(e^{-k} z^{n})}{\vartheta_{z}} \frac{\left|n\log{z}- k\right|}{n\varsigma_{n}}
    \leq \frac{\mathfrak{m}_{1}(\Phi)}{\vartheta_{z}}\cdot\frac{1}{n^\frac{\alpha}{1+\alpha}}.
\end{align*}
Combining all the above estimates, we conclude that
\[ \| \mathscr{D}_{n,\Phi,\Psi}^{m}(\mathfrak{h})(z) - \mathfrak{h}(z) \|_{\infty} = \mathcal{O}\left( n^{-\frac{\alpha}{1+\alpha}} \right) \quad \text{as} \quad n \to \infty.\]
 This completes the proof.
\end{proof}

\section{Example Kernels and Numerical Illustration}\label{sec5}

In this section, we introduce two key examples of Mellin-type kernels that serve as the foundational components in constructing the proposed Durrmeyer-type exponential sampling operators.

The first example is the Mellin B-spline kernel, obtained by extending the classical $B$-spline concept to the Mellin transform setting via the logarithmic substitution \( t = \log z \). Formally, for \( z > 0 \) and order \( n \in \mathbb{N} \), the kernel is defined as  
\[
B_n(z) = \frac{1}{(n-1)!} \sum_{k=0}^n (-1)^k \binom{n}{k} \left( \frac{n}{2} + \log z - k \right)_+^{n-1},
\]
where the positive part function \((x)_+\) is given by \(\max\{x, 0\}\). The Mellin $B$-spline kernel is compactly supported on the interval \(\left[e^{-\frac{n}{2}}, e^{\frac{n}{2}}\right]\) and exhibits smoothness and integrability properties essential for the operator analysis. For our purposes, we consider $\Phi$ to be the Mellin $B$-spline kernel of order two, given by 
\[B_{2}(z) = \max \{0,\, 1 - |\log z|\}, \quad z > 0.\]  

The second kernel used is the Mellin-Fejér kernel, parameterized by \(\beta \geq 1\) and \( t \in \mathbb{R} \), and defined as  
\[
F_\beta^t(z) = \frac{\beta}{2\pi z^{t}} \left[ \sinc\left( \frac{\beta \log \sqrt{z}}{\pi} \right) \right]^2,
\]
where \(\sinc(x) = \frac{\sin(\pi x)}{\pi x}\).

In this study, we take $\Psi$ to be the Mellin- Fejér kernel with  \(\beta = \pi\) and \( t = 0\), leading to the simplified form  
\[
F_\pi^0(z) = \frac{1}{2} \left[\sinc\left( \frac{\log z}{2} \right) \right]^2.
\]

The  Mellin $B$-spline kernel  $\Phi$ satisfies the conditions  $(\Phi.1)$ and $(\Phi.2)$, while the Mellin-Fejér kernel $\Psi$ satisfies $(\Psi.1)$  and $(\Psi.2)$. Together, they form an admissible pair for analyzing the convergence behavior of the Durrmeyer-type exponential sampling operators. Within this framework, the approximation and convergence properties of both Max–Product and Max–Min Durrmeyer-type exponential sampling operators can be rigorously investigated.
\subsection*{Numerical Experiments and Graphical Results}

To evaluate the performance and convergence characteristics of the proposed operators, we employ two test functions defined on the interval \([0, 3]\). The first is a smooth oscillatory function, given by
\[
f(s) = \frac{\arctan\left( \frac{\sin(\pi u) + 1}{1+ u^2} \right)}{\arctan(2)},
\]

and the second is a piecewise-defined function given by 

\[
g(s) = \begin{cases}
0.1 + \frac{0.8}{9} u^2, & 0 \leq u < 1.1, \\
0.9 - 0.4 \sin^2\left( 2\pi (u - 1.1) \right), & 1.1 \leq u < 2.0, \\
0.3 + 0.7 (3-u), & 2.0 \leq u \leq 3, \\
0, & \text{otherwise}.
\end{cases}
\]
Both test functions are continuous and bounded, making them suitable for examining the approximation accuracy and stability of the proposed operators.

\subsection*{Approximation of the Smooth Function $f$}

Figures \ref{fig:maxprod_f} and \ref{fig:maxmin_f} illustrate the results of applying the Max-Product and Max-Min Durrmeyer operators to the smooth function $f$. A comparative analysis of their performance for different values of $n$ is presented in Figure \ref{fig:comp_f}. It can be observed that the Durrmeyer-type Max-Product operator demonstrates a slight advantage over its Max–Min counterpart, a trend that is further confirmed by the pointwise error values reported in Tables \ref{t1f} and \ref{t2f}. 
\begin{figure}[H]
    \centering
    \includegraphics[width=0.9\textwidth]{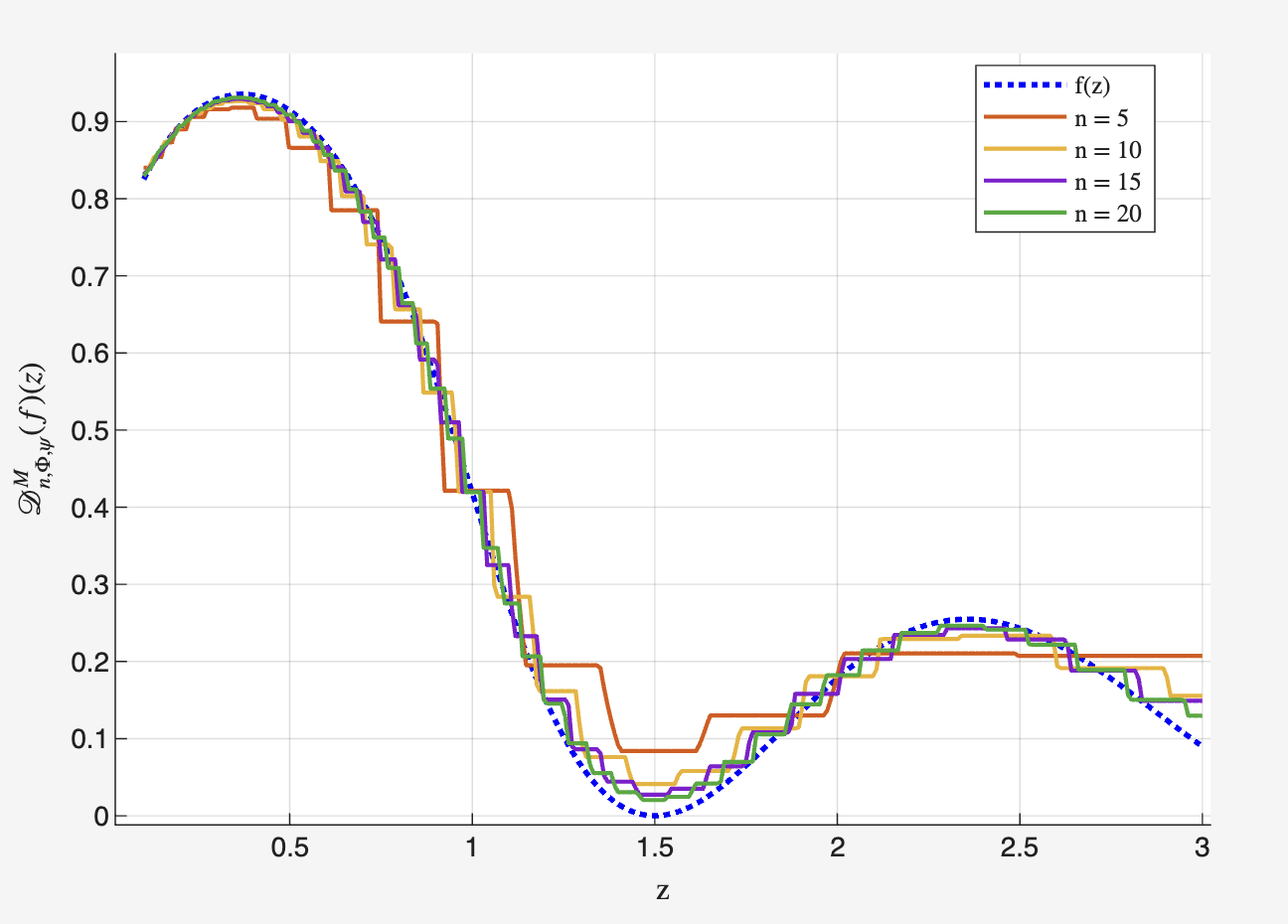}
    \caption{Approximation of $f$ using the Max-Product Durrmeyer operator for different values of $n$.}
    \label{fig:maxprod_f}
\end{figure}
\vspace{0.2em}
\begin{figure}[H]
    \centering
    \includegraphics[width= 0.9\textwidth]{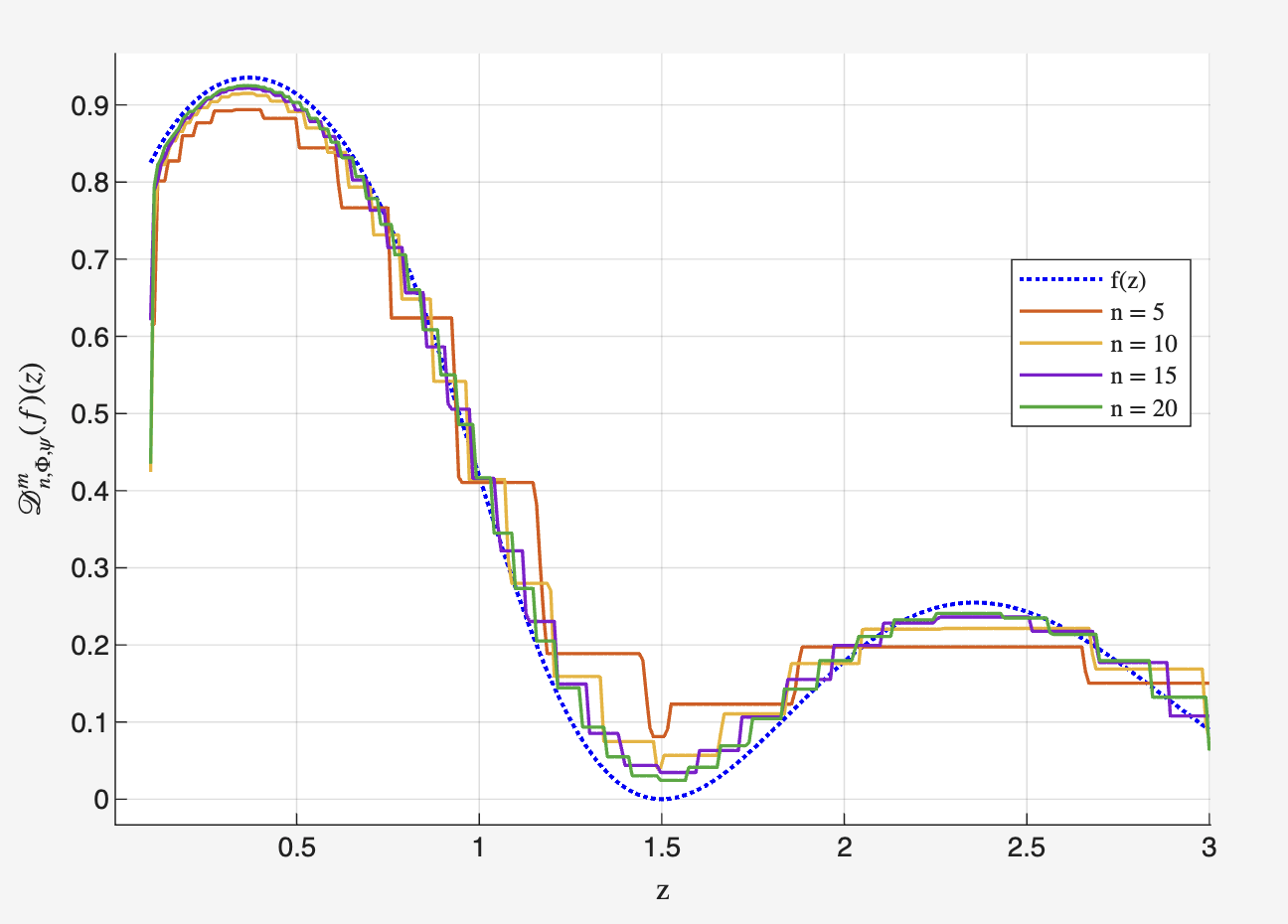}
    \caption{Approximation of $f$ using the Max-Min Durrmeyer operator for different values of $n$.}
    \label{fig:maxmin_f}
\end{figure}
\begin{figure}[H]
    \centering
    \includegraphics[width=0.9\textwidth]{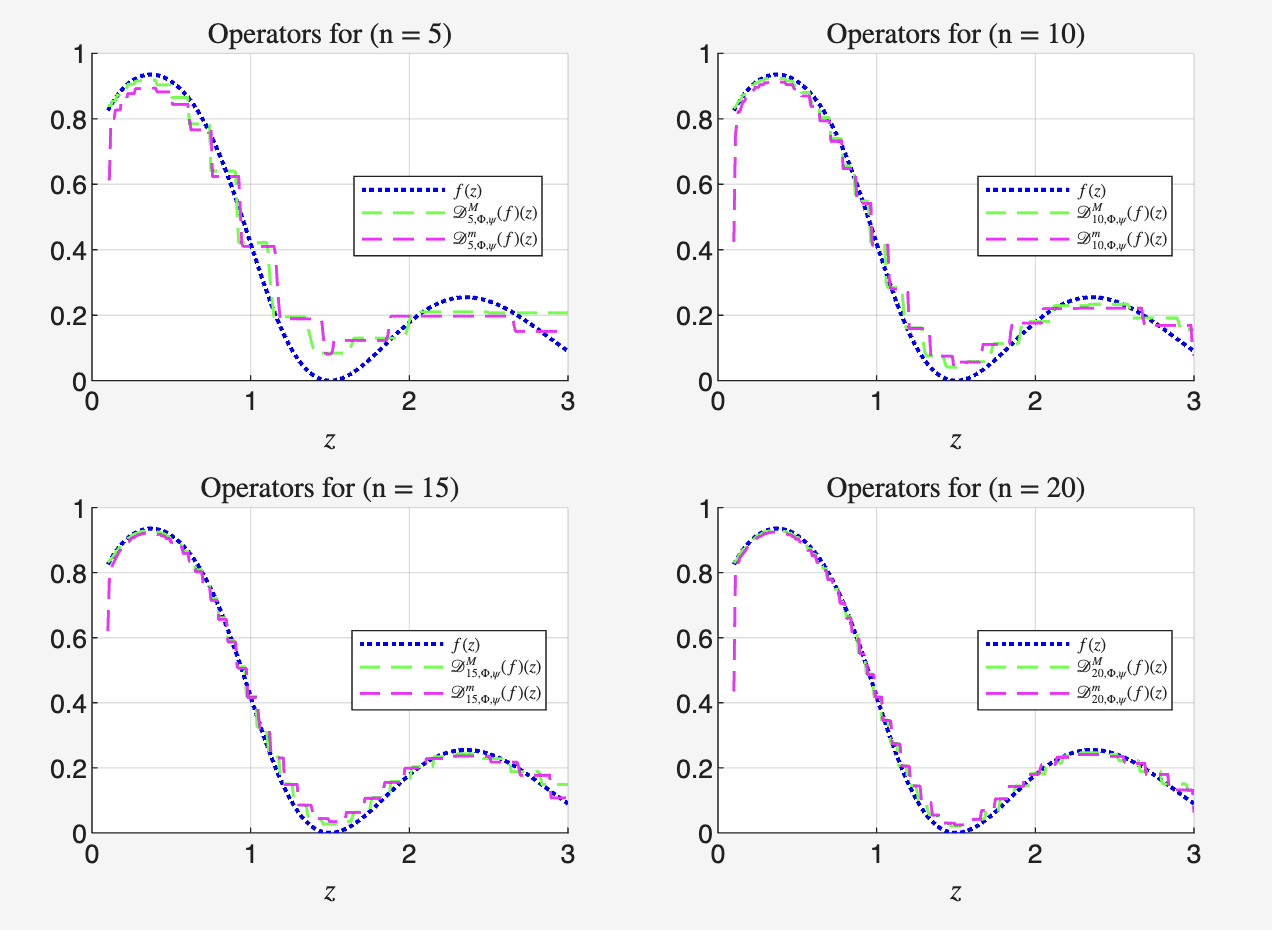}
    \caption{Comparison of the approximations of $f$ by Max-Product and Max-Min Durrmeyer operator.}
    \label{fig:comp_f}
\end{figure}

The corresponding pointwise error tables for each operator are presented below:
\setlength{\tabcolsep}{14pt}
\begin{table}[ht]
\centering
\large
\caption{Pointwise absolute errors of the Max-Product Durrmeyer operator for the function $f$.}\label{t1f}
\begin{tabular}{|c|c|c|c|c|}
\hline
$z$ & $n=5$ & $n=10$ & $n=15$ & $n=20$ \\
\hline
0.3 & 0.013047 & 0.006167  & 0.003983  & 0.002951 \\
0.8 & 0.054074 & 0.038152  & 0.032861  & 0.030167  \\
1.5 & 0.08393  & 0.041165  & 0.027289  & 0.020454  \\
2.2 & 0.029278 & 0.010613  & 0.005212  & 0.002913 \\
2.8 & 0.046903 & 0.030925  & 0.027645  & 0.010275  \\
\hline
\end{tabular}
\end{table}
\setlength{\tabcolsep}{14pt}
\begin{table}[ht]
\centering
\large
\caption{Pointwise absolute errors of the Max-Min Durrmeyer operator for the function $f$.}\label{t2f}
\begin{tabular}{|c|c|c|c|c|}
\hline
$z$ & $n=5$ & $n=10$ & $n=15$ & $n=20$ \\
\hline
0.3 & 0.036776 & 0.018713 & 0.012502 & 0.009253 \\
0.8 & 0.070915 & 0.046449 & 0.038353 & 0.01103   \\
1.5 & 0.081093 & 0.056877 & 0.034616 & 0.024386  \\
2.2 & 0.042393 & 0.019329 & 0.011395 & 0.007278 \\
2.8 & 0.010219 & 0.00813  & 0.016618  & 0.019043 \\
\hline
\end{tabular}
\end{table}

\subsection*{Approximation of the Piecewise Function $g$}

The performance of the operators on the piecewise test function $g$ is illustrated in Figures \ref{fig:maxprod_g}–\ref{fig:comp_g}. The corresponding estimation errors are presented in the pointwise error Tables \ref{t1g} and \ref{t2g}, which further demonstrate the convergence behavior of the proposed operators.
\begin{figure}[H]
    \centering
    \includegraphics[width=0.9\textwidth]{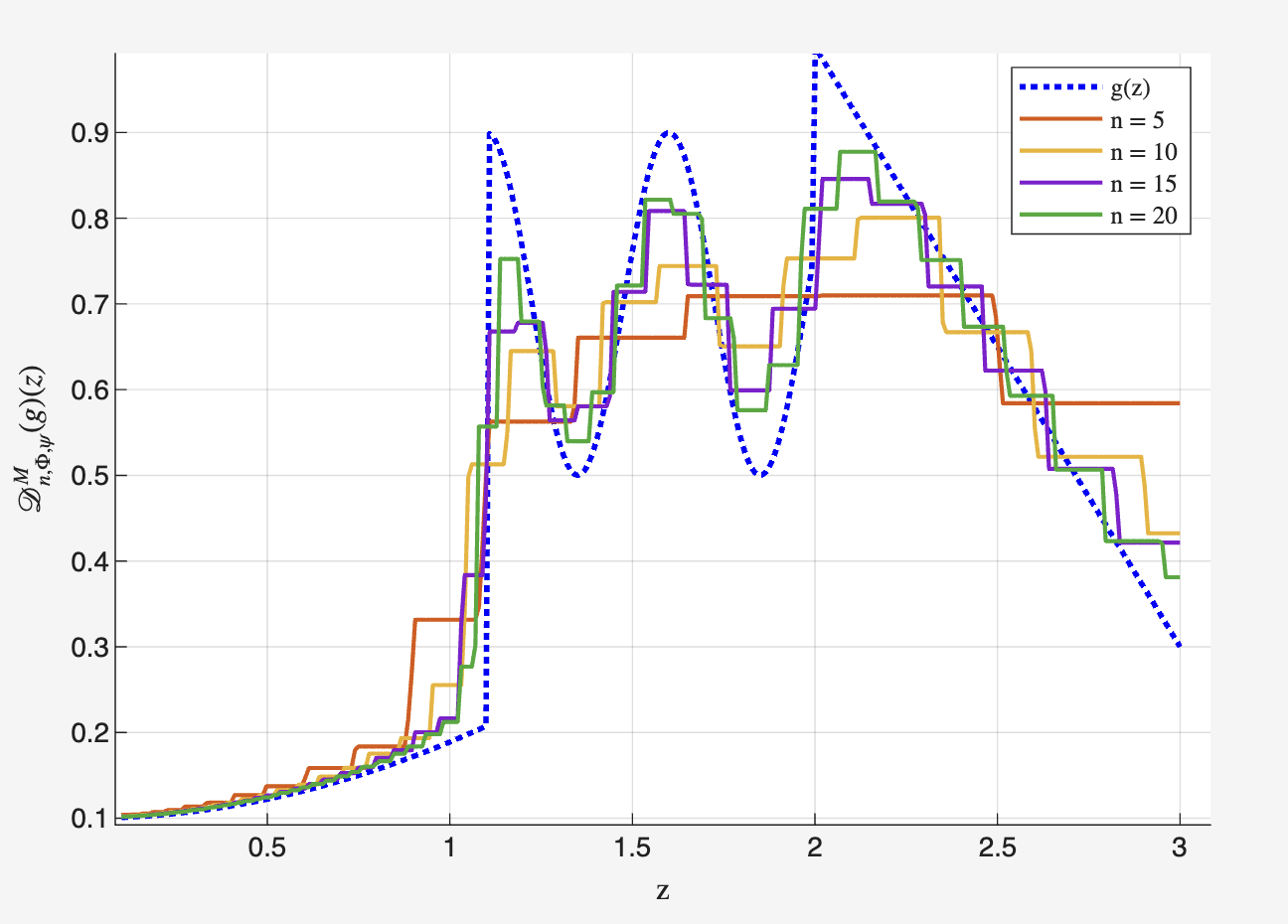}
    \caption{Approximation of $g$ using the Max-Product Durrmeyer operator for different values $n$.}
    \label{fig:maxprod_g}
\end{figure}
\begin{figure}[H]
    \centering
    \includegraphics[width=0.9\textwidth]{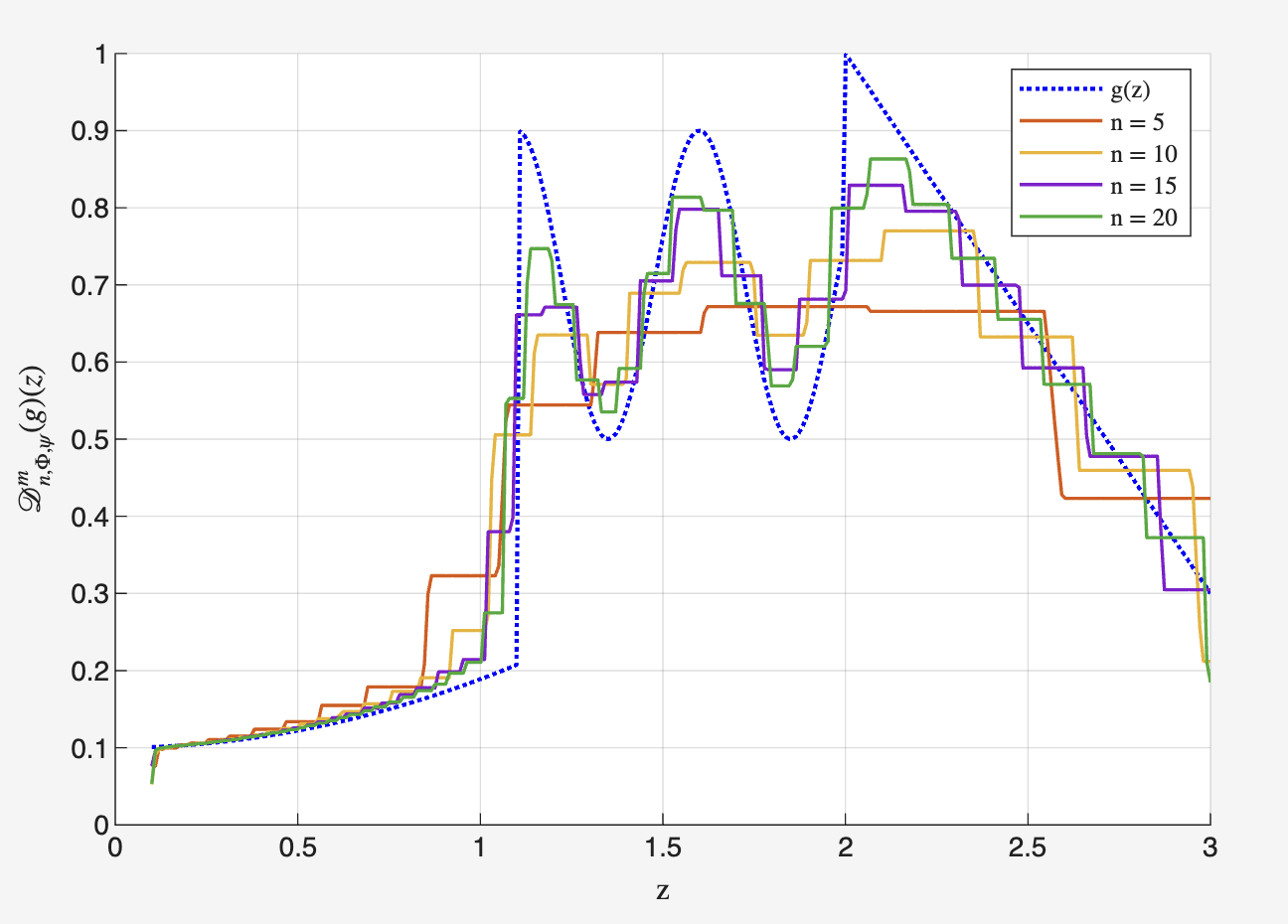}
    \caption{Approximation of $g$ using the Max-Min Durrmeyer operator for different values of $n$.}
    \label{fig:maxmin_g}
\end{figure}
\begin{figure}[H]
    \centering
    \includegraphics[width=0.9\textwidth]{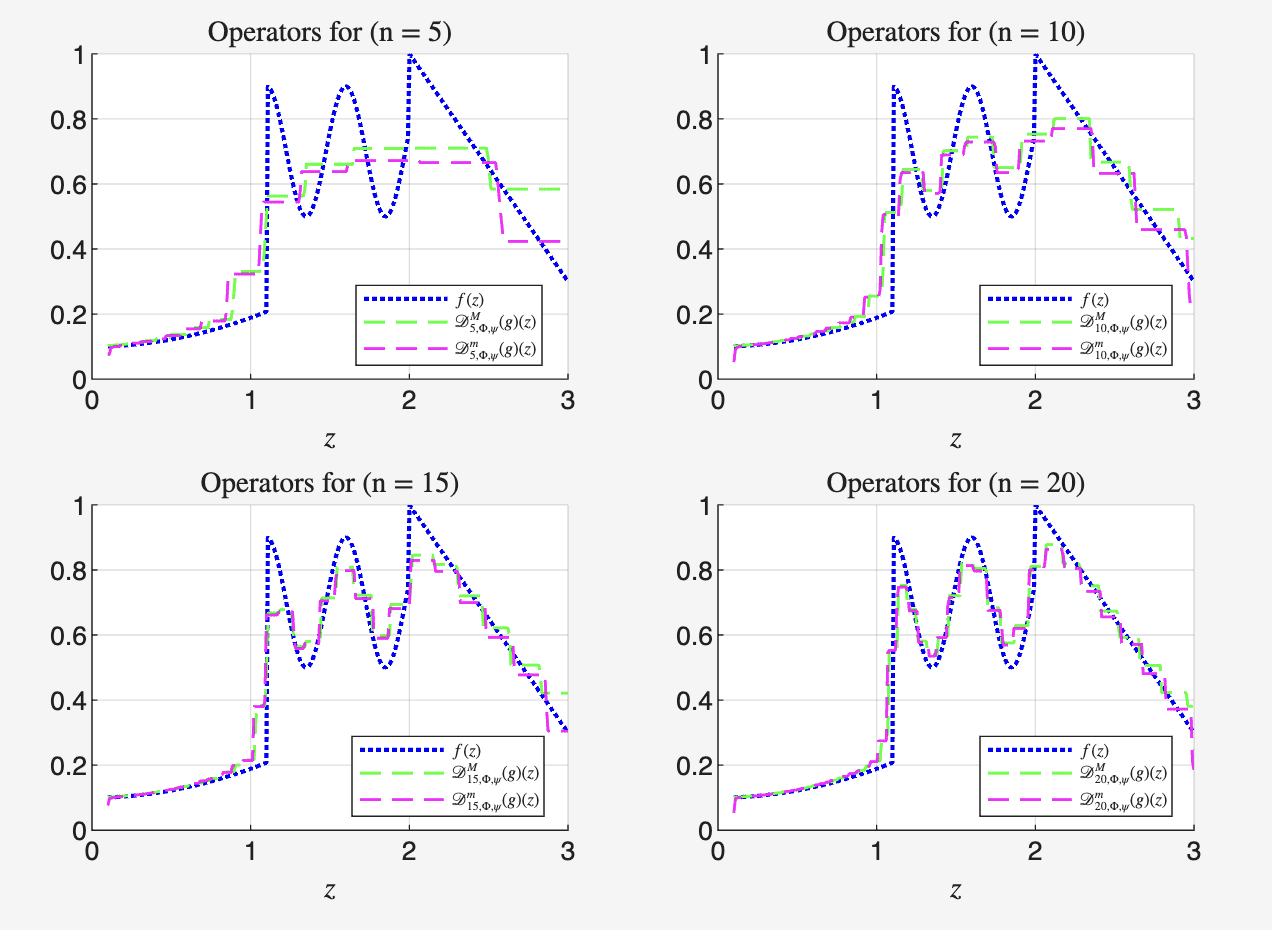}
    \caption{Comparison of the approximations of $g$ by Max-Product and Max-Min Durrmeyer operator.}
    \label{fig:comp_g}
\end{figure}

The corresponding pointwise error tables for each operator are presented below:
\setlength{\tabcolsep}{12pt}
\begin{table}[ht]
\centering
\large
\caption{Pointwise absolute errors of the Max-Product Durrmeyer operator for the function $g$.}\label{t1g}
\begin{tabular}{|c|c|c|c|c|}
\hline
$z$ & $n=5$ & $n=10$ & $n=15$ & $n=20$ \\
\hline
0.3 & 0.00548  & 0.00270  & 0.00171  & 0.00132 \\
0.8 & 0.02673  & 0.01842  & 0.01344  & 0.00967 \\
1.5 & 0.10118  & 0.05965  & 0.04769  & 0.04039 \\
2.2 & 0.15006  & 0.05942  & 0.04307  & 0.04060 \\
2.8 & 0.14416  & 0.08171  & 0.06752  & 0.01667 \\
\hline
\end{tabular}
\end{table}
\setlength{\tabcolsep}{12pt}
\begin{table}[ht]
\centering
\large
\caption{Pointwise absolute errors of the Max-Min Durrmeyer operator for the function $g$.}\label{t2g}
\begin{tabular}{|c|c|c|c|c|}
\hline
$z$ & $n=5$ & $n=10$ & $n=15$ & $n=20$ \\
\hline
0.3 & 0.00254  & 0.00119  & 0.00070   & 0.00058 \\
0.8 & 0.02191  & 0.01620  & 0.01203   & 0.00862  \\
1.5 & 0.12352  & 0.07252  & 0.05660   & 0.04696   \\
2.2 & 0.19429  & 0.08987  & 0.06406   & 0.05570   \\
2.8 & 0.01671  & 0.01959  & 0.03778   & 0.04121   \\
\hline
\end{tabular}
\end{table}



\section{Concluding Remarks:}
In summary, this work introduced the Durrmeyer variants of the Max-Product and Max-Min exponential sampling operators. Their well-definedness, along with pointwise and uniform convergence properties, was rigorously analyzed. The approximation behavior of these operators was further investigated using the logarithmic modulus of continuity. To substantiate the theoretical findings, graphical illustrations and pointwise error tables were presented for two representative test functions, employing the Mellin $B$-spline and Mellin–Fejér kernels. Additional comparative plots highlighted the convergence behavior of both operators. The results reveal that, as $n$ increases, both operators effectively approximate the target functions, with the Max-Product Durrmeyer exponential sampling operator exhibiting slightly faster convergence than its Max-Min counterpart. Many applications, particularly in signal and image processing, have been developed using these operators. Several researchers—such as those in \cite{bajpeyi2022approximation, Acar2025, bardaro2021durrmeyer, cai2024convergence} have recently made significant contributions to the literature on sampling type operators. They have established Durrmeyer-type exponential sampling operators and derived both pointwise and uniform convergence theorems, along with quantitative estimates for the corresponding order of approximation.

\bmhead{Acknowledgements}
The authors would like to thank to the reviewers for their comments and suggestions to improve the article in this format.

\section*{Declarations}

\bmhead{Funding}This work is not funded by any agencies.
\bmhead{Conflict of interest}There is no conflict of interest regarding the publication of this article.
\bmhead{Data availability}Not applicable.



\end{document}